\def\newaliasedtheorem#1[#2]#3{
  \newaliascnt{#1@alt}{#2}
  \newtheorem{#1}[#1@alt]{#3}
  \expandafter\newcommand\csname #1@altname\endcsname{#3}
}
\theoremstyle{plain}
\theoremstyle{definition}
\theoremstyle{remark}
\theoremstyle{plain}
\theoremstyle{definition}
\newcommand{\R}{\ensuremath{\mathbb{R}}}
\newcommand{\N}{\ensuremath{\mathbb{N}}}
\newcommand{\scal}[2]{\langle #1,#2 \rangle}
\newcommand{\norm}[1]{\left\lVert#1\right\rVert}
\def\Chi#1{\hbox{{\large $\chi$}{\Large $_{_{#1}}$}}}
\newcommand{\st}{\, :\, }
\newcommand{\dx}{\text{d}x}
\newcommand{\dy}{\text{d}y}
\newcommand{\diff}{\text{d}}
\DeclareMathOperator{\Int}{Int}
\newcommand{\grad}{\nabla}
\newcommand{\perim}{\mathcal{P}}
\newcommand{\symmdiff}{\Delta}
\newcommand{\bdry}{\partial}
\newcommand{\lebesgue}{\ensuremath{\mathcal{L}}}
\newcommand{\haus}{\ensuremath{\mathcal{H}}}
\DeclareMathOperator{\supp}{supp}
\DeclareMathOperator{\diam}{diam}
\DeclareMathOperator{\Span}{span}
\DeclareMathOperator*{\argmin}{argmin}
\newcommand{\FF}{\mathcal{F}}	
\newcommand{\GG}{\mathcal{G}}	
\newcommand{\RR}{\mathcal{R}}	
\newcommand{\I}{\mathcal{I}}
\newcommand{\K}{\mathcal{K}}
\def\HH{$(\hbox{\bf H})$}
\def\HI{$(\hbox{\bf I})$}
\begin{document}
\title[A note on some non-local variational problems]{A note on some non-local variational problems}
\author{Davide Carazzato}
\address{Scuola Normale Superiore, Piazza dei Cavalieri 7, 56126 Pisa, Italy}
\email{\href{mailto:davide.carazzato@sns.it}{davide.carazzato@sns.it}.}
\date{}
\begin{abstract}
    We study two non-local variational problems that are characterized by the presence of a Riesz-like repulsive term that competes with an attractive term. The first functional is defined on the subsets of $\R^N$ and has the fractional perimeter $\perim_s$ as attractive term. The second functional instead is defined on  $L^1(\R^N;[0,1])$ and contains an attractive term of positive-power-type. For both of the functionals we prove that balls are the unique minimizers in the appropriate volume constraint range, generalizing the results already present in the literature for more specific energies.
\end{abstract}
\maketitle

\section*{Introduction}
	In this note we deal with two constrained minimization problems, each of them written as sum of an attractive term and a repulsive term. The functionals that we want to minimize are $\FF_{\gamma}:\mathcal{B}(\R^N)\to[0,+\infty]$ and $\GG:\mathcal{K}\to[0,+\infty]$, and are defined as
	\begin{align*}
		\FF_{\gamma}(E) &\coloneqq \perim_s(E) + \gamma\int_E\int_E g(x-y)\dx\dy &\forall E\in\mathcal{B}(\R^N),\\
		\GG(h) &\coloneqq \int_{\R^N}\int_{\R^N}\left(|x-y|^{\alpha}+g(x-y)\right)h(x)h(y)\dx\dy &\forall h\in\mathcal{K},
	\end{align*}
    where $\mathcal{K}\coloneqq L^1(\R^N;[0,1])$ and $\mathcal{B}(\R^N)$ is the Borel $\sigma$-algebra on $\R^N$. In those definitions $\perim_s(E)$ denotes the fractional $s$-perimeter for some fixed $s\in(0,1)$, $\alpha>0$ is a constant, $\gamma>0$ is the coupling parameter between the attractive and the repulsive term and $g:\R^N\setminus\{0\}\to[0,+\infty)$ is an interaction kernel whose properties will be specified later.\\
	Of course, the minimization of those functionals is trivial if we do not put any constraint, so we rewrite the minimization problems respectively as
	\begin{gather}
		\min\left\{\FF_{\gamma}(E)\st E\in\mathcal{B}(\R^N), |E|=\omega_N\right\},\label{eq:FF}\tag{$F$}\\
		\min\left\{\GG(h)\st h\in \mathcal{K}, \norm{h}_1=m\right\}\label{eq:GG}\tag{$G$},
	\end{gather}
	where $\gamma>0$ and $m>0$ are two parameters. Our final aim is to study the behaviour of the minimizers of those problems when $\gamma \ll 1$ and $m\gg 1$. One notices that, even though the parameters $m$ and $\gamma$ play a similar role in those problems, they are technically different if $g$ is non-homogeneous (that is the case we are interested in). In the first problem we vary the coupling parameter, and not the total mass, because our arguments do not apply well to this second case. This is a technical issue, and probably it can be solved imposing some mild growth condition on $g$. Of course, these two approaches are completely equivalent if $g$ is homogeneous, since the volume constraint can be fixed to be $\omega_N$ up to rescaling the sets.
    
	The general aim of this note is to prove some results similar to those already present in the literature for more specific functionals. What connects these two problems is a common technique: the competing terms enjoy some quantitative stability inequalities, and we combine them to obtain the rigidity results for the minimizers in some parameters ranges.

    The first section introduces the notation and some preliminary results that ease some computations. We gather here those quantities that play a significant role in both of the problems (with minor differences), while the more specific ones are presented in the successive sections.
	
	In the second section we focus on the ``perimeter problem'', that is already studied in \cite{km1,km2,boncris,franklieb} with the standard perimeter in place of $\perim_s$ and a negative power as function $g$. The problem with the fractional perimeter was studied in \cite{f2m3}, where the negative power varies in the largest possible range. We make use of many tools developed in this last article to handle the term involving $\perim_s$. The two more recent papers \cite{novaga-pratelli, cfp} instead treat the problem \eqref{eq:FF} without assuming $g$ to be homogeneous. In this paper we apply some results contained in \cite{cfp}, combined with a strong version of the quantitative isoperimetric inequality present in \cite{f2m3}, to prove the following theorem:
	\begin{mainthm}\label{mainthm:FF}
		Let $s\in(0,1)$ and let $g:\R^N\setminus\{0\}\to[0,+\infty)$ be a function satisfying \HH. There exists $\gamma_{0}(s,g)>0$ such that, if $\gamma<\gamma_{0}$, then any minimizer $E_{\gamma}$ of $\FF_{\gamma}$ with volume $\omega_{N}$ coincides with a ball of radius $1$.
	\end{mainthm}
	We point out that other generalizations are being studied, and they concern both of the terms of $\FF_{\gamma}$. For example in \cite{alama2020nonlocal} the authors establish some existence and regularity result for the minimizers of a functional containing a weighted perimeter (but a repulsive term of negative-power type). Moreover, if the weight is a monomial function, they also recover that the balls are the unique minimizers in the reasonable volume regime (that is, where the attractive term should heuristically be stronger than the repulsive one). Another example is Pegon's article \cite{pegon}, where the author treats the problem with the standard perimeter and a radial and globally integrable repulsive kernel. The global integrability assumption allows Pegon to rewrite the minimization problem as the difference between the perimeter and a generalized non-local perimeter.
	
	The third section is devoted to the problem \eqref{eq:GG}, where we show how the proofs present in \cite{franklieb19} can be modified to work with a quite generic function $g$. Here we need to impose an additional assumption on $g$, which is very close to the setting in which Frank and Lieb assert that their theorems could be generalized. Indeed, they also include a non-homogeneous attractive term in their generalization, but we preferred to deal only with the other one because it is more similar to the problem \eqref{eq:FF}. In the end we are able to prove the following theorem, that is analogous to \autoref{mainthm:FF}, where we show that the attractive term completely overwhelms the repulsive one if the volume constraint is \textit{big} enough:
	\begin{mainthm}\label{mainthm:GG}
		Let $\alpha>0$ and let $g:\R^N\setminus\{0\}\to[0,+\infty)$ be a function satisfying \HI. There exists a threshold $m_{0}=m_{0}(\alpha,g,N)>0$ such that the only minimizers for $\GG$ with ``volume" constraint $m>m_{0}$ are the characteristic function of balls.
	\end{mainthm}
	We point out that this kind of problems can be posed also in the measure setting (instead of $L^1(\R^N;[0,1])$) and one can impose a generic mass constraint (i.e. not necessarily large), and in this more general situation many different phenomena can appear. They are studied for example in \cite{BCLR} and \cite{CDM}, where even local minimizers are considered (with respect to a certain Wasserstein distance). Many numerical experiments have been performed, which give an idea of the complex situation that arises from this relatively simple functional. We also highlight that in \cite{CDM} the restrictions on the interaction kernel allow the authors to answer some regularity questions by means of the obstacle problem theory, while we work essentially by hands, and this permits us to consider a quite generic function $g$.
    \subsection*{Acknowledgements} The content of this paper is part of the Master Thesis of the author, who is very grateful to Aldo Pratelli for his supervision and his guidance during the development of this project.
	
	\section{Notation and preliminary results}\label{sec:preliminary}
        We will denote respectively by $B, B(x,r), B[m]\subset\R^N$ the unitary ball centered in the origin, the ball of radius $r$ centered in $x$ and the ball centered in $0$ with $|B[m]|=m$. Also, if $l>0$ and $x\in\R^d$, we denote the $d$-dimensional cube centered in $x$ with sides of length $2l$ as
        \[
            Q_d(x,l)\coloneqq \left\{y\in\R^d\st|x_i-y_i|\leq l\ \forall i=1,\ldots,d\right\}.
        \]
        Given any function $f:\R^N\to\R$ and a parameter $t>0$, we define the dilated function $f[t](x)\coloneqq f(x/t)$, so that $\int f[t](x)\dx = t^N\int f(x)\dx$.
        \subsection*{Hypotheses on $g$} Here we gather the hypotheses that our setting requires. The first one contains the most general assumptions, which will be almost always supposed to hold, while the second one will be used only in \autoref{sec:2}. We suppose to have $g:\R^N\setminus\{0\}\to[0,+\infty)$ such that\\
        \begin{itemize}
            \item[\HH] $g\in L^1_{loc}(\R^N)$, $g(tx)\leq g(x)$ for every $t\geq1$ and for every $x\neq 0$ and moreover there exists $R_g>0$ such that $g(x)$ is bounded in $\{|x|>R_g\}$;
            \item[\HI] the function $g$ satisfies the condition \HH\ and moreover it is radial and the function $x\mapsto |x|g(x)$ is of class $L^1_{loc}(\R^N)$.
        \end{itemize}

        It is not hard to see that the local integrability is necessary in order to ensure that $\FF_{\gamma}$ (respectively $\GG$) is finite on $B(x,r)$ for every $r>0$ (respectively on the characteristic function of $B(x,r)$). Therefore, the hypotheses in \HH\ are very mild, while those in \HI\ are there primarily to have some good regularity property for the potential that is defined in \eqref{eq:v_E-RR}.\\
        Since we will frequently integrate $g$ onto sets, we define the repulsive potential generated by a generic set $E\subset\R^N$ and its own repulsive energy respectively by
        \begin{equation}\label{eq:v_E-RR}
            v_E(x)\coloneqq \int_Eg(x-y)\dy,\qquad \RR(E)\coloneqq\int_E\int_Eg(x-y)\dx\dy.
        \end{equation}
        If instead we consider a function $h\in L^1\cap L^{\infty}$, we denote by $v_h$ and $\RR(h)$ respectively the potential generated by $h$ and its own interaction energy, and they are defined with formulas analogous to \eqref{eq:v_E-RR}. When we consider the problem \eqref{eq:GG}, we also denote the attractive part of $\GG(h)$ by $\mathcal{I}_{\alpha}(h)$, where $\alpha>0$ is a fixed parameter. Sometimes we will also compute the interaction between different sets or functions, which we denote by
        \begin{gather*}
            \RR(E_1,E_2)=\int_{E_1}\int_{E_2}g(x-y)\dx\dy,\\
            \mathcal{I}_{\alpha}(h_1,h_2)=\iint |x-y|^{\alpha}h_1(x)h_2(y)\dx\dy,\qquad \RR(h_1,h_2)=\iint g(x-y)h_1(x)h_2(y)\dx\dy,
        \end{gather*}
        where $E_1,E_2\subset\R^N$ are sets with finite volume and $h_1,h_2\in L^1\cap L^{\infty}$ are functions with bounded support (but we do not suppose that they are non-negative). Similarly, $\GG(h_1,h_2)$ denotes the full interaction energy between $h_1$ and $h_2$. Moreover, it will be useful to express in a compact way the repulsive potential of a ball computed at a certain distance from the origin (in the case of $g$ radial). To this end, we define the function
        \begin{equation}\label{eq:psi}
            \psi(R,r)\coloneqq \int_{B(0,R)}g(re_1-y)\dy.
        \end{equation}
        For the second problem, it is also useful to give a name to the functions representing the attractive potential of a unitary ball and the \textit{full} potential that is produced by a ball of volume $m$. Namely, given $m>0$, we define the functions
        \begin{equation}\label{eq:potentials}
        \begin{split}
            \varphi(r)&\coloneqq \int_{B}|re_{1}-y|^{\alpha}\dy,\\
            \Phi_{m}(r) &\coloneqq \int_{B[m]}\left(|re_{1}-y|^{\alpha}+g(re_{1}-z)\right)\dy
        \end{split}
        \end{equation}
        for all $r\geq 0$.\\
        In isoperimetric problems it is very important to deal with the \textit{asymmetry} of a set $E$, that we denote by $A(E)$ and that is defined as
        \[
            A(E)\coloneqq \inf_{x\in\R^{N}}\frac{|E\symmdiff (x+B[m])|}{m} \qquad\text{where }m=|E|.
        \]
        One can easily see that the $\inf$ is attained using a compactness argument. There exists also an analogous quantity defined for functions $h\in\mathcal{K}$:
        \[
            A(h) \coloneqq \frac{\inf\left\{\norm{h-\Chi{x+B[m]}}_{L^{1}}\st x\in\R^{N}\right\}}{m}\qquad \text{where }m=\norm{h}_{1},
        \]
        and again the $\inf$ is attained. Of course this quantity makes sense also for $h\in L^1$, but it is significant only for $h$ positive and bounded.\\
        Many times during our computations we will not track down the precise constants appearing, and we will often denote with the same letter or expression a constant that changes from a line to the other. This is done in order to keep the formulas shorter, and it is also justified by the fact that many of those constants are probably not sharp, as it is also pointed out in \cite[Remarks 2]{franklieb19}.
        \begin{lemma}\label{lemma:lipschitz-RR}
            Let $g:\R^N\setminus\{0\}\to[0,+\infty)$ be a function satisfying \HH. If $h_1,h_2\in\mathcal{K}$ are two functions with $\max\{\norm{h_1}_1,\norm{h_2}_1\}\leq \widetilde{m}<+\infty$, then
            \[
                |\RR(h_1)-\RR(h_2)|\leq C(g,\widetilde{m})\norm{h_1-h_2}_1.
            \]
            Moreover, if $\norm{h}_1\geq \omega_N$, then $v_h\leq C(g)\norm{h}_1$ everywhere.
        \end{lemma}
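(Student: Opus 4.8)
The plan is to reduce everything to two elementary facts contained in \HH: that $g$ is integrable near the origin and bounded away from it; the monotonicity of $g$ along rays will not be needed here. Set $L_g \coloneqq \int_{B(0,R_g)} g(z)\,dz$, which is finite because $g\in L^1_{loc}(\R^N)$, and $M_g \coloneqq \esssup_{\{|z|>R_g\}} g$, which is finite by the last clause of \HH. Note that the reflected kernel $g^*(z)\coloneqq g(-z)$ again satisfies \HH\ with the same $R_g$, $L_g$, $M_g$.

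First I would prove the ``moreover'' part, which also serves as the engine for the Lipschitz bound. For any $h\in\mathcal{K}$ and any $x\in\R^N$ I split
\[
    v_h(x)=\int_{\{|x-y|\le R_g\}}g(x-y)h(y)\,dy+\int_{\{|x-y|>R_g\}}g(x-y)h(y)\,dy .
\]
In the first integral I bound $h\le 1$ and change variables, getting $\le\int_{B(0,R_g)}g(z)\,dz=L_g$; in the second I bound $g\le M_g$, getting $\le M_g\|h\|_1$. Hence $v_h\le L_g+M_g\|h\|_1$ everywhere, and the identical estimate (applied to $g^*$) gives the same bound for $\hat v_h(y)\coloneqq\int g(x-y)h(x)\,dx=\int g^*(y-x)h(x)\,dx$. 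If $\|h\|_1\ge\omega_N$ then $L_g\le(L_g/\omega_N)\|h\|_1$, so $v_h\le(L_g/\omega_N+M_g)\|h\|_1=:C(g)\|h\|_1$, which is the second statement. For the first statement I only record that $\|v_h\|_\infty,\|\hat v_h\|_\infty\le L_g+M_g\widetilde m$ whenever $\|h\|_1\le\widetilde m$.

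For the Lipschitz estimate I write $\RR(h_1)-\RR(h_2)=\iint g(x-y)\bigl(h_1(x)h_1(y)-h_2(x)h_2(y)\bigr)\,dx\,dy$ and use the algebraic identity $h_1(x)h_1(y)-h_2(x)h_2(y)=h_1(x)\bigl(h_1(y)-h_2(y)\bigr)+\bigl(h_1(x)-h_2(x)\bigr)h_2(y)$. Each of the two resulting double integrals is absolutely convergent: its modulus is at most $\|\hat v_{h_1}\|_\infty\|h_1-h_2\|_1$, respectively $\|v_{h_2}\|_\infty\|h_1-h_2\|_1$, both finite by the previous step. So Fubini applies and yields
\[
    \RR(h_1)-\RR(h_2)=\int_{\R^N}\hat v_{h_1}(y)\bigl(h_1(y)-h_2(y)\bigr)\,dy+\int_{\R^N}v_{h_2}(x)\bigl(h_1(x)-h_2(x)\bigr)\,dx .
\]
Taking absolute values and using $\max\{\|h_1\|_1,\|h_2\|_1\}\le\widetilde m$ gives $|\RR(h_1)-\RR(h_2)|\le 2\bigl(L_g+M_g\widetilde m\bigr)\|h_1-h_2\|_1$, i.e. the claim with $C(g,\widetilde m)=2(L_g+M_g\widetilde m)$.

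There is no serious obstacle: the whole argument is a splitting estimate plus Fubini. The only two points requiring a little care are that $g$ is not assumed symmetric, so one must carry along the reflected kernel $g^*$ (or symmetrize $g$ at the outset, which changes neither $\RR$ nor the hypotheses), and that $h_1-h_2$ is neither non-negative nor compactly supported in general — but $\RR$ and the potentials were defined exactly so as to accommodate such combinations, and the absolute convergence noted above legitimizes every interchange of integrals.
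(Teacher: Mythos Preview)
Your proof is correct. Both your argument and the paper's rest on the same engine --- the uniform bound $v_h\le L_g+M_g\|h\|_1$ obtained by splitting at radius $R_g$ --- but the decomposition of $\RR(h_1)-\RR(h_2)$ differs. The paper breaks $h_1,h_2$ into the three non-negative pieces $(h_1-h_2)\vee 0$, $(h_2-h_1)\vee 0$, $h_1\wedge h_2$ and expands $\RR$ bilinearly in those; you instead use the standard polarization $h_1(x)h_1(y)-h_2(x)h_2(y)=h_1(x)(h_1-h_2)(y)+(h_1-h_2)(x)\,h_2(y)$ directly. Your route is shorter and gives the cleaner constant $2(L_g+M_g\widetilde m)$, at the price of having to justify Fubini for a signed integrand (which you do) and of tracking the reflected kernel $g^*$; the paper's positive-parts decomposition avoids both of these small bookkeeping points since every intermediate integrand is non-negative. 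Neither approach needs the ray-monotonicity in \HH.
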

        \begin{proof}
            It is very easy to see that the function $x\mapsto \int g(x-y)h(y)\dy$ is bounded, and more precisely there exists a constant $C_1(g,\norm{h}_1)<+\infty$ that controls its $L^{\infty}$ norm:
            \begin{equation}\label{eq:bound-v}
                \begin{split}
                v_h(x)&=\int g(x-y)h(y)\dy= \int_{B(x,R_g)}g(x-y)h(y)\dy+\int_{\R^N\setminus B(x,R_g)}g(x-y)h(y)\dy\\
                &\leq \int_{B(x,R_g)}g(x-y)\dy+\int_{\R^N\setminus B(x,R_g)}g(x-y)h(y)\dy\\
                &\leq \int_{B(0,R_g)}g(y)\dy+ \norm{h}_1\sup\left\{g(x):|x|>R_g\right\} \eqqcolon C_1(g,\norm{h}_1),
                \end{split}
            \end{equation}
            where we used that $g$ is locally integrable and it is bounded outside $B(0,R_g)$. Notice also that $C_1(g,m)$ is increasing as a function of $m$.\\
            Then the statement follows immediately from the ``linearity'' of $\RR(h_1,h_2)$ in both of the arguments, separating the contributions of $(h_1 - h_2)\vee 0$, $(h_2- h_1)\vee 0$ and $h_1\wedge h_2$:
            \begin{equation}\label{eq:RR-difference-identity}
            \begin{split}
                |\RR(h_1)-\RR(h_2)| &= |\RR((h_1 - h_2)\vee 0)+2\RR((h_1 - h_2)\vee 0,h_1\wedge h_2)+\RR(h_1\wedge h_2)\\
                &\qquad -\RR((h_2- h_1)\vee 0)-2\RR((h_2- h_1)\vee 0,h_1\wedge h_2)-\RR(h_1\wedge h_2)|\\
                &\leq |\RR((h_1 - h_2)\vee 0)+2\RR((h_1 - h_2)\vee 0,h_1\wedge h_2)|\\
                &\qquad+|\RR((h_2- h_1)\vee 0)+2\RR((h_2- h_1)\vee 0,h_1\wedge h_2)|.
            \end{split}
            \end{equation}
            We bound only the first term in the last expression, since the second one is analogous:
            \begin{align*}
                &\RR((h_1 - h_2)\vee 0)+2\RR((h_1 - h_2)\vee 0,h_1\wedge h_2)\\
                &\qquad=\iint g(x-y)\left[(h_1(y) - h_2(y))\vee 0)+2(h_1(y)\wedge h_2(y))\right](h_1(x) - h_2(x))\vee 0)\dy\dx\\
                &\qquad\leq \int3 v_{h_1}(x)((h_1(x) - h_2(x))\vee 0)\dx\\
                &\qquad\leq 3C_1(g,\widetilde{m})\norm{h_1-h_2}_1.
            \end{align*}
            With this we proved the first bound, and now we easily obtain the second one dividing the integral as we did in \eqref{eq:bound-v} and using the local integrability of $g$:
            \begin{align*}
                v_h(x)&=\int g(x-y)h(y)\dy= \int_{B(x,R_g)}g(x-y)h(y)\dy+\int_{\R^N\setminus B(x,R_g)}g(x-y)h(y)\dy\\
                &\leq \int_{B(0,R_g)}g(y)\dy + \norm{h}_1\sup\left\{g(x):|x|>R_g\right\}\\
                &\leq C(g)\frac{\norm{h}_1}{\omega_N}+\norm{h}_1\sup\left\{g(x):|x|>R_g\right\} = C(g)\norm{h}_1.
            \end{align*}
        \end{proof}
        \begin{rem}\label{rem:lipschitz-RR}
            One can specialize the previous statement to $h_2=\Chi{B[m]}$ with $m=\norm{h_1}_1$ to obtain on the right hand side the asymmetry (since the left hand side is translation invariant).
        \end{rem}

\section{Fractional perimeter as attractive term}\label{sec:1}
	We firstly recall the definitions of fractional perimeter and fractional Sobolev norm, together with some other important classes of sets that we will make use of. Successively we present the results that we are going to use and that have been already developed in \cite{f2m3,cfp}.
	\begin{defn}
		The fractional perimeter of order $s\in(0,1)$ is denoted by $\perim_s$ and is defined as
		\[
            \perim_s(E)\coloneqq \int_E\int_{E^c}\frac{1}{|x-y|^{N+s}}\dx\dy
        \]
		for every measurable set $E\subset \R^N$ (of course, it could possibly be $+\infty$).
	\end{defn}
	\begin{defn}\label{defn:frac-sobolev-seminorm}
        Given an open set $\Omega\subset\R^N$ and $u:\Omega\to\R$, its fractional Sobolev seminorm of order $s$ (and exponent $2$) is defined as
		\[
            [u]_s\coloneqq \left(\int_{\Omega}\int_{\Omega}\frac{|u(x)-u(y)|^2}{|x-y|^{N+2s}}\dx\dy\right)^{1/2}.
        \]
		Thus, we can also define the fractional Sobolev norm as $\norm{u}^2_{W^{s,2}}\coloneqq \norm{u}^2_{L^2(\Omega)}+[u]^2_s$.\\
		Moreover, we will use an analogous definition if $M^m\subset \R^N$ is a compact $m$-submanifold embedded in $\R^N$: given a function $u:M\to\R$, we define its fractional Sobolev seminorm as
		\[
            [u]_s\coloneqq \left(\int_{M}\int_{M}\frac{|u(x)-u(y)|^2}{|x-y|^{m+2s}}\diff \haus^m(x)\diff\haus^m(y)\right)^{1/2},
		\]
		where $|x-y|$ is the euclidean distance between $x$ and $y$ measured in $\R^N$. As before, we define $\norm{u}^2_{W^{s,2}}\coloneqq \norm{u}^2_{L^2(M;\haus^m)}+[u]^2_{s}$. In order to simplify the notation, we will often omit the set where we compute the various norms/seminorms when it coincides with the domain of the function $u$.
	\end{defn}
    \begin{rem}
    	From the definitions it is clear that $2\perim_s(E) =\left[\Chi{E}\right]_{s/2}^2$ with $\Omega=\R^N$ in \autoref{defn:frac-sobolev-seminorm}.
    \end{rem}
    The next definition appears in \cite{fuglede,cicleo}, and it is important in our study since we will use an inequality that is closely related to the so-called Fuglede inequality for nearly spherical sets. The $W^{1,\infty}$ bound in our definition is different from the one present in the aforementioned papers because \autoref{thm:fuglede-frac} already contains the suitable bound for the Sobolev norm.
    \begin{defn}\label{defn:nearly-spherical}
    	An open set $E\subset \R^N$ is \textit{nearly spherical} if $|E|=\omega_N$, its barycenter is $0$ and there exists a Lipschitz function $u:\bdry B\to(-1,1)$ such that
    	\[
            \bdry E = \{(1+u(x))x\st x\in\bdry B\},
        \]
    	with $\norm{u}_{\infty}+\norm{\grad u}_{\infty}\leq1$.
    \end{defn}
    The following are two different versions of the quantitative isoperimetric inequality for the fractional perimeter that have been developed in \cite{f2m3} (respectively labelled as Theorem 1.1 and Theorem 2.1 in that paper) and we report them here for convenience. The first is the fractional counterpart of the general isoperimetric inequality, and it is remarkable that the asymmetry appears at the power $2$ just like in the standard one (see \cite[Theorem 1.1]{fumapra}), while the analogue of \autoref{thm:fuglede-frac} for the standard perimeter can be found in \cite[Theorem 4.1]{cicleo}.
    
	\begin{thm}\label{thm:quantitative-i-i-frac}
		Let $N\geq 2$ and $s\in (0,1)$. There exists a constant $C(N,s)>0$ such that, for every $E\subset\R^{N}$ with finite measure, it holds that
		\[
			\perim_{s}(E)\geq \perim_{s}(B[m]) + C(N,s)m^{(N-s)/N}A(E)^{2},
		\]
		where $m=|E|$.
	\end{thm}
	\begin{thm}\label{thm:fuglede-frac}
		There exists $\delta_{0}<1/2$ and $C_{0}>0$ that depend only on $N$ with the following property: if $E\subset\R^{N}$ is a nearly spherical set with $\norm{u}_{W^{1,\infty}(\bdry B)}<\delta_{0}$, then
		\[
			\perim_{s}(E)-\perim_{s}(B) \geq C_{0}\left([u]_{\frac{1+s}{2}}^{2}+s\perim_{s}(B)\norm{u}^{2}_{L^{2}(\bdry B)}\right)	\qquad\forall s\in(0,1).
		\] 
	\end{thm}
    Finally, we rewrite here for convenience the statement of \cite[Lemma 4.5]{f2m3}, which is useful to cut a set with a good control on the fractional perimeter of the new set.
    \begin{lemma}\label{lemma:cutting}
		Let $N\geq 2$ and $s\in(0,1)$. Given a set $E\subset \R^{N}$ such that $|E\setminus B|\leq \eta<1$, there exists a radius $1\leq r(E)\leq 1+C(N,s)\eta^{1/N}$ such that
		\[
            \perim_{s}(E\cap B(0,r(E))) \leq \perim_{s}(E)-C(N,s)\frac{|E\setminus B(0,r(E))|}{\eta^{s/N}}.
        \]
	\end{lemma}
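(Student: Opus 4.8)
The plan is to cut $E$ along a suitable sphere $\partial B(0,r)$ with $r$ slightly larger than $1$ and to balance the $s$‑perimeter removed by the cut against a ``cross term''. The basic tool is the identity
\[
    \perim_s(E\cap B(0,r)) + \perim_s(E\setminus B(0,r)) = \perim_s(E) + 2\, I(r), \qquad I(r):=\int_{E\cap B(0,r)}\int_{E\setminus B(0,r)}\frac{\dx\dy}{|x-y|^{N+s}},
\]
which holds (in $[0,+\infty]$) for every $r>0$ and is obtained by expanding the double integrals defining the three perimeters over the partition of $\R^N$ into $E\cap B(0,r)$, $E\setminus B(0,r)$, $B(0,r)\setminus E$ and $B(0,r)^c\setminus E$. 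Since $E\cap B\subseteq B$ and $|E\setminus B|\le\eta<1$ one automatically has $|E|\le\omega_N+1$; and if $\perim_s(E)=+\infty$ the statement is trivial (take $r(E)=1$), so I may assume $\perim_s(E)<+\infty$, in which case $I(r)\le\perim_s(B(0,r))<+\infty$ and the identity forces all the perimeters appearing to be finite. Writing $\mu(r):=|E\setminus B(0,r)|$ and rearranging, the statement reduces to: \emph{find $r\in[1,\,1+C(N,s)\eta^{1/N}]$ with $\perim_s(E\setminus B(0,r))-2I(r)\ge C(N,s)\,\mu(r)\,\eta^{-s/N}$.}

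First I would discard the trivial part: by the fractional isoperimetric inequality (the sharper form of \autoref{thm:quantitative-i-i-frac}, discarding the asymmetry term) one has $\perim_s(E\setminus B(0,r))\ge c(N,s)\,\mu(r)^{(N-s)/N}$, and since $\mu(r)\le\mu(1)=|E\setminus B|\le\eta$ this is $\ge c(N,s)\,\mu(r)\,\eta^{-s/N}$; hence it is enough to find $r$ in the prescribed range with $I(r)\le\tfrac14 c(N,s)\,\mu(r)^{(N-s)/N}$. To choose $r$ I would run a coarea/ODE argument on $\mu$: the coarea formula gives that $\mu$ is non‑increasing and absolutely continuous with $\mu'(r)=-\haus^{N-1}(E\cap\partial B(0,r))$ a.e., so wherever $\mu>0$ the function $\mu^{1/N}$ is absolutely continuous with $(\mu^{1/N})'=-\haus^{N-1}(E\cap\partial B(0,r))\big/\big(N\mu^{(N-1)/N}\big)$. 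Fix a small $\varepsilon=\varepsilon(N,s)>0$ and set $r_0:=1+\tfrac{2N}{\varepsilon}\eta^{1/N}$. If $\mu(r_1)=0$ for some $r_1\in[1,r_0]$, take $r(E)=r_1$ and the claim is trivial. Otherwise $\mu>0$ on $[1,r_0]$ and, since $\mu^{1/N}$ can drop by at most $\mu(1)^{1/N}\le\eta^{1/N}$ there, the ``bad'' set $\{r\in[1,r_0]:\haus^{N-1}(E\cap\partial B(0,r))>\varepsilon\,\mu(r)^{(N-1)/N}\}$ has measure $\le N\eta^{1/N}/\varepsilon=(r_0-1)/2$; pick $r=r(E)$ in the complementary (good) set, so $\haus^{N-1}(E\cap\partial B(0,r))\le\varepsilon\,\mu(r)^{(N-1)/N}$.

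The hard part will be the cross‑term estimate at this radius, i.e.\ proving $I(r)\le\tfrac14 c(N,s)\,\mu(r)^{(N-s)/N}$. This is a trace‑type inequality: the interaction of $E$ across the sphere $\partial B(0,r)$ ought to be controlled by the measure of the slice $E\cap\partial B(0,r)$, with the exponent forced by the scaling $\perim_s(\lambda E)=\lambda^{N-s}\perim_s(E)$, namely
\[
    I(r)\;\le\;C(N,s)\,\haus^{N-1}\big(E\cap\partial B(0,r)\big)^{\frac{N-s}{N-1}}\quad(\text{in the relevant regime}),
\]
since then the choice of $r$ gives $I(r)\le C(N,s)\,\varepsilon^{\frac{N-s}{N-1}}\,\mu(r)^{(N-s)/N}$ and it suffices to take $\varepsilon$ small. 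I would prove such a bound by first localizing near $\partial B(0,r)$ — pairs $(x,y)$ at mutual distance $\ge\ell$ contribute at most $|E|\,\mu(r)\,\ell^{-N-s}$, which is acceptable for a suitable cut‑off scale $\ell$ (somewhat larger than the natural scale $\mu(r)^{1/N}$) — then flattening the sphere, which is legitimate because its curvature is bounded ($r\le 1+C(N,s)$), and integrating out the normal variable by Fubini to compare the remaining interaction with a Riesz double integral over the (flattened) slice in $\R^{N-1}$, which the Riesz rearrangement inequality bounds by a constant times $|E\cap\partial B(0,r)|^{(N-s)/(N-1)}$. The genuinely delicate point — and where I expect most of the work to go — is that the tangential projections of $E\cap B(0,r)$ and of $E\setminus B(0,r)$ may be much larger than the slice; that contribution has to be absorbed using that whatever projects away from the slice sits at a positive distance from the sphere, so the corresponding interaction again carries a favourable power of $\mu(r)$ (here $\mu(r)\le\eta$ is used crucially). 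Organizing all of this is exactly \cite[Lemma 4.5]{f2m3}, whose argument I would follow.

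Granting the cross‑term bound, choose $\varepsilon=\varepsilon(N,s)$ so small that $2C(N,s)\,\varepsilon^{(N-s)/(N-1)}\le\tfrac12 c(N,s)$. Then at the selected radius
\[
    \perim_s(E\setminus B(0,r))-2I(r)\;\ge\;\Big(c(N,s)-2C(N,s)\,\varepsilon^{\frac{N-s}{N-1}}\Big)\mu(r)^{(N-s)/N}\;\ge\;\tfrac{c(N,s)}{2}\,\mu(r)^{(N-s)/N}\;\ge\;\tfrac{c(N,s)}{2}\,\mu(r)\,\eta^{-s/N},
\]
which is the desired inequality with $C(N,s)=c(N,s)/2$, and $r(E)\le 1+\tfrac{2N}{\varepsilon(N,s)}\eta^{1/N}=1+C(N,s)\,\eta^{1/N}$.
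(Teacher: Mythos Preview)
The paper does not give a proof of this lemma: it is simply quoted from \cite[Lemma~4.5]{f2m3} and used as a black box in \autoref{thm:existence-frac}. So there is no in-paper argument to compare against; your sketch is effectively an outline of (what you take to be) the \cite{f2m3} proof.

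Your scaffolding is sound --- the splitting identity for $\perim_s$, the fractional isoperimetric lower bound $\perim_s(E\setminus B(0,r))\ge c(N,s)\,\mu(r)^{(N-s)/N}$, and the coarea selection of a radius with $\haus^{N-1}(E\cap\partial B(0,r))\le\varepsilon\,\mu(r)^{(N-1)/N}$ are all correct and standard. The gap is the cross-term step. The trace-type bound
\[
    I(r)\;\le\;C(N,s)\,\haus^{N-1}\!\big(E\cap\partial B(0,r)\big)^{\frac{N-s}{N-1}}
\]
is false, and the patch you sketch does not rescue it. Take $E$ to be the union of two balls of radius $\rho$, tangent to $\partial B(0,1)$ at a common point, one from the inside and one from the outside. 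Then $|E\setminus B|\le\eta$ with $\eta\sim\rho^N$, and at $r=1$ the slice has $\haus^{N-1}$-measure zero, so this $r$ lies in your ``good'' set for \emph{every} $\varepsilon>0$. Yet $I(1)$ equals $\rho^{N-s}$ times the Riesz interaction of two tangent unit balls, a fixed positive multiple of $\mu(1)^{(N-s)/N}$. Your remark that mass projecting off the slice ``sits at a positive distance from the sphere and hence carries a favourable power of $\mu(r)$'' does not help here: that distance is itself of order $\rho$, and the resulting contribution is again $\asymp\rho^{N-s}$, the same order as the target, with no gain. Thus the mechanism of driving $I(r)$ below $\tfrac14\,c(N,s)\,\mu(r)^{(N-s)/N}$ by taking $\varepsilon$ small simply fails. (The lemma is of course still true in this example --- one moves to $r\ge 1+2\rho$, where $\mu(r)=0$ --- so this is a counterexample to your argument, not to the statement.)

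Since you ultimately write that organizing the cross-term estimate ``is exactly \cite[Lemma~4.5]{f2m3}, whose argument I would follow'', and since the paper itself just cites that lemma, your proposal is acceptable as an annotated citation. As a self-contained proof, however, the cross-term step needs a different mechanism than the pointwise slice-based trace inequality you wrote.
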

	\begin{rem}\label{rem:monotonicity-frac-per}
		We observe that this minimization problem is monotone with respect to the mass constraint: given $E\subset \R^N$ with mass $m+h$ we can always cut it with an hyperplane in such a way that the new set $G$ satisfies $|G|=m$. But then $\FF_{\gamma}(G)\leq \FF_{\gamma}(E)$ since the repulsive term is clearly reduced (the interaction kernel is non-negative) and also the fractional perimeter is decreased thanks to \cite[Lemma B.1]{f2m3}.
	\end{rem}
	We state now the basic existence theorem, that uses \autoref{thm:quantitative-i-i-frac} and \autoref{lemma:cutting} to deal with the perimeter term and combines them with \autoref{lemma:lipschitz-RR} which provides a good control on the repulsive term. Our proof of \autoref{thm:existence-frac} takes some ideas from the proof of \cite[Lemma 5.1]{f2m3}, but it is simpler since we do not track precisely the dependence of the various constants that appear.
	%
	\begin{thm}\label{thm:existence-frac}
		Given $s\in(0,1)$ and a function $g:\R^N\setminus\{0\}\to[0,+\infty)$ satisfying \HH, there exists $\gamma_{0}(N,s,g)>0$ such that, if $\gamma<\gamma_{0}$, then $\FF_{\gamma}$ admits a minimizer with volume constraint $\omega_{N}$. 
	\end{thm}

	\begin{proof}
        We will prove that, if $\gamma$ is small enough, then we find some candidates for the problem \eqref{eq:FF} that are bounded sets. Then we will apply a compactness result to conclude via the standard method of calculus of variations.\\
        We prove the boundedness of some candidates for \eqref{eq:FF}, and to this aim we can suppose that the asymmetry of a given candidate is non-zero (otherwise this step is not necessary) and it can be taken as small as we want. In fact, let $E$ be a competitor for the minimization problem with volume $\omega_{N}$ and parameter $\gamma$, then we can suppose that $\FF_{\gamma}(E)\leq \FF_{\gamma}(B)$, and thus using \autoref{thm:quantitative-i-i-frac} and \autoref{rem:lipschitz-RR} we have that
		\begin{equation}\label{eq:inutile18}
			C(N,s)A(E)^{2}\leq \perim_{s}(E)-\perim_{s}(B) \leq \gamma(\RR(B)-\RR(E)) \leq \gamma C(g,N)A(E).
		\end{equation}
		Hence we can take $2\omega_N\gamma_{0}<C(N,s)/C(g,N)$ so that every set $E$ chosen as before satisfies $|E\setminus B|<1/2$, and thanks to the translation invariance of the problem we can suppose that the optimal ball for the asymmetry is centered in the origin.\\
		Now we can use \autoref{lemma:cutting} and see that there exists a radius $r(E)$, that satisfies $1\leq r(E)\leq 1+C(N,s)|E\setminus B|^{1/N}$, with the following property:
		\[
            \perim_{s}(E\cap B(0,r(E)))\leq \perim_{s}(E)-C(N,s)\frac{|E\setminus B(0,r(E))|}{\eta^{s/N}}\qquad\text{with }\eta=|E\setminus B|.
        \]
		For the next computations we define the set $E'=E\cap B(0,r(E))$ and the parameter $\lambda = (|E|/|E'|)^{1/N}$. Using the rescaling inequalities for the fractional perimeter and the repulsive term (see \cite{novaga-pratelli}) we arrive to
		\begin{equation}\label{eq:inutile17}
			\FF_{\gamma}(\lambda E')\leq \lambda^{N-s}\perim_{s}(E')+\gamma\lambda^{2N}\RR(E')\leq \lambda^{2N}\FF_{\gamma}(E').
		\end{equation}
		We now define $p=|E\setminus E'|/|E|$, so $\lambda=1/(1-p)^{1/N}$ and, reducing $\gamma_{0}$ if necessary, we can suppose that $p\leq 1/2$. Thus, using that the function $p\mapsto(1-p)^{-2}$ is Lipschitz in $[0,1/2]$, we obtain the estimate $\lambda^{2N}\leq1+Cp$ for a universal constant $C>0$. Hence we can plug it into \eqref{eq:inutile17} and use the monotonicity of $\RR(E)$ with respect to the inclusion (see \autoref{rem:monotonicity-frac-per}) to obtain
		\begin{align*}
			\FF_{\gamma}(\lambda E') &\leq (1+Cp)\FF_{\gamma}(E')\leq \FF_{\gamma}(E)-C(N,s)\frac{|E|p}{\eta^{s/N}}+Cp\FF_{\gamma}(E')\\
			&\leq\FF_{\gamma}(E) + p\left(C\FF_{\gamma}(E)-C(N,s)\frac{\omega_{N}}{\eta^{s/N}}\right).
		\end{align*}
		We can suppose $\gamma_0<1$, so that $\FF_{\gamma}(E)\leq \FF_{\gamma}(B) \leq \FF_1(B) \eqqcolon C(s,g)$, and we can rewrite the previous inequality as
		\[
            \FF_{\gamma}(\lambda E') \leq \FF_{\gamma}(E) + p\left(C(s,g)-\frac{C(N,s)}{\eta^{s/N}}\right).
        \]
		Exploiting again \eqref{eq:inutile18} we see that, if $\gamma_0$ is small enough, then $\eta^{s/N}<C(N,s)/C(s,g)$. With this choice of $\gamma_0$ we see that $\FF_{\gamma}(\lambda E')<\FF_{\gamma}(E)$ since we supposed that $A(E)>0$. The new set $\lambda E'$ is contained in $B(0,r(E))$ and $r(E)\leq 2+2C(N,s)$ thanks to the bound on $r(E)$ provided by \autoref{lemma:cutting}. This argument guarantees that, without loss of generality, we can consider only competitors that are contained in a fixed ball.\\
        Now we get the existence result. Let $E_k$ be a minimizing sequence for $\FF_{\gamma}$ with constrained mass $\omega_N$ such that $E_{k}\subset B(0,2+2C(N,s))$. Then we can use the compact embedding theorem for fractional Sobolev spaces (we refer to \cite[Theorem 7.1]{dinpava}) to apply the standard method of calculus of variations: the compact embedding theorem provides a subsequence (not relabelled) that converges in $L^1_{loc}$ topology, then both $\perim_{s}$ and $\RR$ are lower semicontinuous with respect to the $L^{1}_{loc}$ convergence thanks to Fatou's lemma, and finally the mass constraint is preserved by that convergence since the sets $E_k$ are uniformly bounded.
	\end{proof}
	\begin{defn}\label{defn:min-frac-per}
        Given $C>0$, we say that a Borel set $E\subset\R^{N}$ is a $C$\textit{-minimizer of the} $s$\textit{-perimeter} if for every bounded set $F\subset\R^{N}$ it holds that
        \[
            \perim_{s}(E)\leq \perim_{s}(F)+C|E\symmdiff F|.
        \]
    \end{defn}
    We prove that minimizers of $\FF_1$ are $C$\textit{-minimizers} of the $s$-perimeter for some $C>0$, and this will be useful because of the good regularity properties held by those sets. We refer to \cite[Chapter 21]{maggi} and \cite{tamanini84} for the standard perimeter case, where the classical regularity theory is also developed. Here we write the result that we will use, which is a particular version of \cite[Corollary 3.6]{f2m3}:
    \begin{thm}\label{thm:reg-almost-min-frac-per}
    	If $N\geq 2$, $C\geq0$, $s\in(0,1)$, $E_k\subset\R^N$ is a $C$-minimizer of the $s$-perimeter for every $k\in\N$ and $E_k\to B$ in $L^1$, then there exists $\alpha\in(0,1)$ and a sequence $u_k\in C^{1,\alpha}(\bdry B)$ such that
    	\begin{enumerate}
            \item for $k$ large enough $\bdry E_k=\left\{(1+u_k(x))x\st x\in\bdry B\right\}$;
            \item $\lim_{k\to\infty}\norm{u_k}_{C^{1,\alpha}}=0$.
    	\end{enumerate}
    \end{thm}
	\begin{prop}\label{prop:almost-min-frac-per}
		Let $E\subset\R^{N}$ be a minimizer of $\FF_{\gamma}$ with measure $\omega_N$. Then $E$ is a $C$-minimizer of the $s$-perimeter for some constant $C=C(\gamma,g,s)>0$.
	\end{prop}

	\begin{proof}
		Suppose by contradiction that there exists a sequence of sets $F_{k}\subset \R^{N}$ with $\perim_{s}(F_{k})\leq \perim_{s}(E)$, $|E\symmdiff F_{k}|\neq 0$ and
		\[
            C_{k}\coloneqq\frac{\perim_{s}(E)-\perim_{s}(F_{k})}{|E\symmdiff F_{k}|}\to +\infty.
        \]
		Using the isoperimetric inequality for the fractional perimeter and that $\perim_{s}(F_{k})\leq \perim_{s}(E)$ we have that $|F_{k}|$ is bounded by a constant that depends on $N$, $g$ and $s$. Now we can estimate $\FF_{\gamma}(F_{k})$ as
		\begin{equation}\label{eq:inutile8}
			\begin{split}
				\FF_{\gamma}(F_{k})&=\perim_{s}(E)-C_{k}|E\symmdiff F_{k}| + \gamma \RR(F_{k})\\
				&=\perim_{s}(E)-C_{k}|E\symmdiff F_{k}| + \gamma (\RR(F_k)-\RR(E)+\RR(E))\\
				&\leq \FF_{\gamma}(E)-C_{k}|E\symmdiff F_{k}|+\gamma C(g)|E\symmdiff F_{k}|,
			\end{split}
		\end{equation}
		where we used \autoref{rem:lipschitz-RR} in the last inequality. Since $C_k\to+\infty$, we notice that \eqref{eq:inutile8} implies that $|E\symmdiff F_k|\to0$. Now let us take $k$ so large that $C_k>\gamma C(g)$. Then $|F_k|<|E|$: the assumption on $C_k$ guarantees that $\FF_{\gamma}(F_k)<\FF_{\gamma}(E)$, and if $|F_k|\geq |E|$ then we can cut $F_k$ with an hyperplane to obtain a new set $F_k'$ with $|F_k'|=|E|$. Then $\FF_{\gamma}(F_k')\leq \FF_{\gamma}(F_k)<\FF_{\gamma}(E)$, but this is not possible since $E$ is a minimizer of $\FF_{\gamma}$.
        Now that we are sure to have $|F_k|<|E|$ we can rescale the sets $F_{k}$ in order to have the right measure. Notice that $|F_{k}|=|E|+|F_{k}\setminus E|-|E\setminus F_{k}|$, hence we can define
		\begin{equation}\label{eq:lambda_k}
			\lambda_{k} = \left(\frac{|E|}{|F_{k}|}\right)^{1/N} = \left(\frac{|E|}{|E|+|F_{k}\setminus E|-|E\setminus F_{k}|}\right)^{1/N}\leq \left(1-\frac{|E\symmdiff F_{k}|}{|E|}\right)^{-1/N},
        \end{equation}
		and we have already noticed in the proof of \autoref{thm:existence-frac} that $\FF_{\gamma}(\lambda_{k}F_{k})\leq \lambda_{k}^{2N}\FF_{\gamma}(F_{k})$. If we combine this estimate with \eqref{eq:inutile8}, we can take $k$ large enough to have that $C_{k}>2\gamma C(g)$ and Taylor expand the rightmost formula in \eqref{eq:lambda_k} with $|E\symmdiff F_{k}|\ll1$ to get
		\begin{align*}
			\FF_{\gamma}(\lambda_{k}F_{k})&<\left(1-\frac{|E\symmdiff F_{k}|}{|E|}\right)^{-2}\left(\FF_{\gamma}(E)-C_{k}|E\symmdiff F_{k}|+\gamma C(g)|E\symmdiff F_{k}|\right)\\
			&\leq\left(1-\frac{|E\symmdiff F_{k}|}{|E|}\right)^{-2}\left(\FF_{\gamma}(E)-\frac{C_{k}}{2}|E\symmdiff F_{k}|\right)\\
			&\leq \FF_{\gamma}(E)-\frac{C_{k}}{2}|E\symmdiff F_{k}| + C(N)|E\symmdiff F_{k}|\FF_{\gamma}(E).
		\end{align*}
		And from this last inequality we arrive to a contradiction since $C_{k}$ is going to $+\infty$ as $k\to\infty$, so $\FF_{\gamma}(\lambda_{k}F_{k})<\FF_{\gamma}(E)$. In the end, notice that the threshold for $C_{k}$ depends only on $\gamma$, $g$ and $s$.
	\end{proof}

	\begin{rem}
		Looking at the proof it is immediate to notice that if we consider a minimizer $E'$ for $\FF_{\gamma'}$ with $|E'|=\omega_N$ and $\gamma'\leq\gamma$, then $E'$ is a $C$-minimizer of the $s$-perimeter with a constant $C = C(\gamma,g,s)$.
	\end{rem}

	Here we prove a simple growth property in $0$ held by the function $g$:
	\begin{lemma}\label{lemma:growth-g}
		If $g:\R^N\setminus\{0\}\to[0,+\infty)$ is a radial function satisfying \HH, then there exists a constant $C(g)>0$ such that
		\[
			g(x) \leq \frac{C(g)}{|x|^{N}}\qquad\forall x\in B\setminus\{0\}.
		\]
		More precisely, we must have that $\displaystyle\limsup_{x\to0}g(x)|x|^{N}= 0$.
	\end{lemma}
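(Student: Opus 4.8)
The plan is to derive both assertions from a single localized estimate that trades the pointwise value of $g$ at a point for the integral of $g$ over a thin annulus, using nothing but the local integrability and the monotonicity built into \HH. First I would record that, since $g$ is radial, we may write $g(x)=\tilde g(|x|)$ for some $\tilde g:(0,+\infty)\to[0,+\infty)$, and that the hypothesis $g(tx)\le g(x)$ for $t\ge 1$ is exactly the statement that $\tilde g$ is non-increasing: given $0<a\le b$, pick any $x$ with $|x|=a$ and $t=b/a\ge 1$, so that $\tilde g(b)=g(tx)\le g(x)=\tilde g(a)$.

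The heart of the matter is then the following. Fix $r\in(0,1]$ and consider the annulus $A_r:=B(0,r)\setminus B(0,r/2)$, whose Lebesgue measure is $\omega_N(1-2^{-N})r^N$. Every $y\in A_r$ satisfies $|y|\le r$, hence $g(y)=\tilde g(|y|)\ge\tilde g(r)$ by monotonicity, and integrating this inequality over $A_r$ gives
\[
\tilde g(r)\,r^N\ \le\ \frac{1}{\omega_N(1-2^{-N})}\int_{A_r}g(y)\,\dy .
\]
Estimating the right-hand side by $\tfrac{1}{\omega_N(1-2^{-N})}\int_B g$, which is finite because $g\in L^1_{loc}(\R^N)$, yields $g(x)\,|x|^N\le C(g)$ for every $x\in B\setminus\{0\}$, with $C(g):=\tfrac{1}{\omega_N(1-2^{-N})}\int_B g$; this is the first claim.

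For the refined statement I would not discard the annulus: from the displayed inequality, with $r=|x|$,
\[
0\ \le\ g(x)\,|x|^N\ \le\ \frac{1}{\omega_N(1-2^{-N})}\int_{B(0,|x|)}g(y)\,\dy ,
\]
and the last integral tends to $0$ as $x\to 0$ by absolute continuity of the integral (apply dominated convergence to $g\,\Chi{B(0,r)}\to 0$ pointwise a.e. as $r\to 0$, dominated by $g\,\Chi{B}\in L^1$). Hence $\limsup_{x\to 0}g(x)|x|^N=0$. I do not expect any genuine obstacle here; the only two points requiring a moment's care are checking that the monotonicity in \HH\ indeed forces $\tilde g$ to be non-increasing, and keeping track of the dimensional constant $\omega_N(1-2^{-N})$ that appears because $A_r$ occupies a fixed proportion of the volume of $B(0,r)$.
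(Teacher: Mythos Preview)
Your proof is correct. Both your argument and the paper's rest on the same key observation --- monotonicity lets one bound $\tilde g(r)\,r^N$ by (a constant times) $\int_{A_r}g$ over a dyadic annulus --- but you package it differently. The paper argues by contradiction: assuming $\limsup_k g(r_k e_1)\,r_k^N>0$ along a sequence with $r_{k+1}<r_k/2$, it stacks the corresponding annuli to bound $\int_B g$ from below by a constant times $\sum_k g(r_k e_1)\,r_k^N$, whence the terms must vanish. You instead extract the pointwise estimate directly from a single annulus $A_r$ and then invoke absolute continuity of the integral ($\int_{B(0,r)}g\to 0$ as $r\to 0$) to obtain the $\limsup$ statement. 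Your route is shorter, avoids the contradiction detour, and even produces an explicit constant $C(g)=\tfrac{1}{\omega_N(1-2^{-N})}\int_B g$; the paper's version trades this for a self-contained series computation that does not appeal to dominated convergence.
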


	\begin{proof}
		We argue by contradiction. Suppose that there exists a sequence $r_{k}\to 0^{+}$ such that $\limsup_{k}g(r_{k}e_1)r_{k}^{N} = \lim_kg(r_{k}e_1)r_{k}^{N}>0$. Without loss of generality we can assume that $r_{k}<1$ and $r_{k+1}<r_{k}/2$ for all $k\in\N$. And then the monotonicity of $g$ implies that
		\begin{align*}
			\int_{B}g(x)\dx &\geq \omega_{N}\sum_{k=1}^{+\infty}g(r_{k}e_1)(r_{k}^{N}-r_{k+1}^{N})\\
			&\geq \omega_{N}\sum_{k=1}^{+\infty}g(r_{k}e_1)r_{k}^{N}\left(1-\frac{1}{2^{N}}\right)= C_{N}\sum_{k=1}^{+\infty}g(r_{k}e_1)r_{k}^{N}.
		\end{align*}
		Since $g\in L^{1}(B)$ we have that the last series converges, so its terms have to be infinitesimal, but this is not compatible with the fact that $\lim_{k}g(r_{k}e_1)r_{k}^{N}>0$.\\
		We proved only the second part of the statement, but the first part can be proved reasoning in an analogous way. Indeed, it is sufficient to take two sequences $r_{k}\in (0,1)$ and $C_{k}\to+\infty$ with $g(r_{k}e_{1})r_{k}^{N}>C_{k}$. Then notice that $r_{k}$ must converge to $0^{+}$ (otherwise we would reach immediately a contradiction with the integrability of $g$), so that the previous argument works again.
	\end{proof}
    
    \begin{prop}\label{prop:frac-sobolev-bound-R}
        Let $E\subset \R^N$ be a nearly spherical set, with $\bdry E$ parametrized by $u:\bdry B\to(-1,1)$ according to \autoref{defn:nearly-spherical}. If $\norm{u}_{\infty}\leq 1/4$ and $g:\R^N\setminus\{0\}\to[0,+\infty)$ is a radial function satisfying \HH, then
        \begin{equation}\label{eq:R-bound-frac}
            \RR(B)-\RR(E)\leq C(g)\norm{u}_{W^{r,2}(\bdry B)}^{2}\qquad\forall r\in[1/2,1).
        \end{equation}
    \end{prop}
    \begin{proof}
        This result can be proved following the procedure exploited in the proof of \cite[Theorem A]{cfp}. There, the inequality \eqref{eq:R-bound-frac} is proved with the standard $W^{1,2}$ norm in place of the fractional one, but following that proof one notices that the $L^2$ norm of $\grad u$ pops up only because of the following inequality:
        \begin{equation*}
        	\int_{\bdry B}\int_{\bdry B}g(x-y)|u(x)-u(y)|^2\dx\dy \leq C(g)\norm{u}_{W^{1,2}(\bdry B)}^2.
        \end{equation*}
        If we are able to write another inequality with the fractional Sobolev norm, then we can follow exactly the proof of \cite[Theorem A]{cfp}. In fact, we can do that: thanks to \autoref{lemma:growth-g}, for any $r_0\in[0,1)$ we have that
    	\begin{align*}
            \int_{\bdry B}\int_{\bdry B}g(x-y)|u(x)-u(y)|^2\dx\dy&\leq C(g)\int_{\bdry B}\int_{\bdry B}\frac{|u(x)-u(y)|^2}{|x-y|^N}\dx\dy\\
            &\leq 2^{r_0}C(g)\int_{\bdry B}\int_{\bdry B}\frac{|u(x)-u(y)|^2}{|x-y|^{N+r_0}}\dx\dy \leq 2C(g)[u]^2_{\frac{1+r_0}{2}},
        \end{align*}
        where clearly the fractional Sobolev seminorm is relative to the hypersurface $\bdry B\subset \R^N$. This is the desired inequality since we can take $r=(1+r_0)/2$.
    \end{proof}
    \begin{rem}
    	Notice that the above inequality is stronger than the one used in \cite{cfp} because of \cite[Proposition 2.2]{dinpava}, that can be applied thanks to the compactness of $\bdry B$.
    \end{rem}


	\begin{proof}[Proof of \autoref{mainthm:FF}]
		We will determine later the value of $\gamma_0<1$, but for now let us suppose to have fixed it. For any $\gamma<\gamma_0$ let $E_{\gamma}\subset\R^{N}$ be a minimizer of $\FF_{\gamma}$ with mass $\omega_{N}$ and barycenter in $0$. From \autoref{prop:almost-min-frac-per} we know that they are $C$-minimizers of the $s$-perimeter for some $C=C(\omega_{N},g,s)>0$. We can apply \autoref{thm:quantitative-i-i-frac} and \autoref{rem:lipschitz-RR} to see that
		\[
			C(N,s)A(E_{\gamma})^{2}\leq \perim_{s}(E_{\gamma})-\perim_{s}(B) \leq \gamma(\RR(B)-\RR(E_{\gamma})) \leq \gamma C(g)A(E_{\gamma}).
		\]
		From this inequality we deduce that $A(E_{\gamma})\to0$ as $\gamma\to0$, or equivalently that $E_{\gamma}\to B$ in $L^{1}$.\\
		Now we can apply \autoref{thm:reg-almost-min-frac-per} to see that, for $\gamma$ small enough, the sets $E_{\gamma}$ are nearly spherical (see \autoref{defn:nearly-spherical}), with $\bdry E_{\gamma}$ parametrized by $u_{\gamma}$. Moreover, the family of functions $\{u_{\gamma}\}_{\gamma\in(0,1)}$ is bounded in $C^{1,\alpha}(\bdry B)$ for some $\alpha\in (0,1)$ and $\norm{u_{\gamma}}_{C^{1}(\bdry B)}\to 0$.\\
		Up to reducing again $\gamma_{0}$, we can suppose that the hypotheses of \autoref{thm:fuglede-frac} are fulfilled for $E_{\gamma}$ with $\gamma<\gamma_{0}$, and thus the following chain of inequalities holds:
		\begin{equation*}
			C_{0}\left([u_{\gamma}]_{\frac{1+s}{2}}^{2}+s\perim_{s}(B)\norm{u_{\gamma}}^{2}_{L^{2}(\bdry B)}\right)\leq \perim_{s}(E_{\gamma})-\perim_{s}(B) \leq \gamma(\RR(B)-\RR(E_{\gamma})).
		\end{equation*}
 		Now we can take $\gamma_0$ so small that $\norm{u_{\gamma}}_{\infty}\leq 1/4$ and continue that chain of inequalities applying \autoref{prop:frac-sobolev-bound-R} with $r=(1+s)/2$:
%
		\[
			C(N,s)\left([u_{\gamma}]^{2}_{\frac{1+s}{2}}+\norm{u_{\gamma}}_{2}^{2}\right) \leq \gamma \cdot 2C(g)\left([u_{\gamma}]^{2}_{\frac{1+s}{2}}+\norm{u_{\gamma}}_{2}^{2}\right).
		\]
		If $\gamma<C(N,s)/(2C(g))$ we have that the above inequality holds if and only if $u_{\gamma}=0$, that is equivalent to have $E_{\gamma}=B$. Hence, it is sufficient to choose $\gamma_{0}$ small enough in order to make all the previous arguments work to conclude the proof.
	\end{proof}
	\begin{rem}
		One notices that \autoref{thm:reg-almost-min-frac-per} holds for sequences of sets, while in our proof we have a family of sets indexed by a continuous parameter. This is not an issue: if the final result was not true, then we could find a sequence $\gamma_k\to0$ and a sequence of sets $E_k\in\argmin\FF_{\gamma_k}$ with $E_k\neq B$, but then we could follow the proof of \autoref{mainthm:FF} for this sequence of sets and obtain a contradiction.
	\end{rem}


\section{Attractive term of positive-power type}\label{sec:2}
    This section can be considered as a short appendix to Frank and Lieb's article \cite{franklieb19} where we explain how to modify the arguments present in their paper that are more affected by the choice of a generic function $g$ instead of a negative power. This also shows some common features between the problems \eqref{eq:FF} and \eqref{eq:GG}:
    \begin{itemize}
    	\item within some suitable constraint ranges, one sees that the minimizers are \textit{exactly balls}, and this happens because the attractive term is much stronger than the repulsive one;
    	\item the functional has a good structure that permits to combine a stability inequality for the attractive term with one for the repulsive term in an effective way, providing the expected result.
    \end{itemize}
    Frank and Lieb work with a functional defined on $\mathcal{K}= L^{1}(\R^{N};[0,1])$, and we keep this setting  because the existence of minimizers is quite easy in that class of objects, while it is not clear if we try to minimize $\GG$ in the class of sets. In fact, Frank and Lieb conjecture that in some cases (that are excluded by the hypotheses) the minimizers \textit{cannot} be characteristic functions, as they claim in \cite[Remarks 2]{franklieb19}. 
    Frank and Lieb study the problem \eqref{eq:GG} when $\alpha>0$ and $g(x)=|x|^{-\lambda}$ for $0<\lambda<N-1$, and they prove that the minimizers are balls if the constraint is \textit{big} enough. One could expect a similar behaviour because, if we take $h[t]$ instead of $h$ for some factor $t>0$, then we see that the attractive and the repulsive terms scale respectively by a factor $t^{2N+\alpha}$ and $t^{2N-\lambda}$, and the only minimizers of the attractive term are balls\footnote{This can be seen using Riesz inequality (see \cite[Theorem 3.7]{lieb-loss}).}. As anticipated, our generalization concerns the repulsive part of $\GG$, where we replace the negative power with a function $g:\R^N\setminus\{0\}\to[0,+\infty)$ that satisfies \HI. Notice that the additional integrability condition expressed in \HI\ guarantees that $g\in L^1_{loc}(\R^{N-1})$ since $g$ is assumed to be radial. In fact, we will use this lower dimensional integrability property because many times we will use Fubini and we will need to estimate an integral made on $(N-1)$-dimensional slices. 
    In the end, we are able to use their proofs with minor changes also for this case, as they asserted that could be done. It is worth to point out that our estimates are necessarily less precise since we ignore the exact behaviour of $g$ (being in fact unknown). Nonetheless, they highlight only the necessary features that the model requires in order to retrieve some important features.

    As it is pointed out in \cite{franklieb19}, the key issue is that a regularity theory for the minimizers of $\GG$ is not yet developed, differently from the functional $\FF_{\gamma}$ that enjoys some good property inherited by the $C$-minimizers of the $s$-perimeter. Hence, the main effort is concentrated in proving that the (translated and rescaled) minimizers converge in \textit{Hausdorff} distance to a ball when $m\to\infty$. Once we have this result we can proceed with our program of using the stability results for the attractive term and the repulsive one (respectively \autoref{thm:power-ineq} and \autoref{prop:low-bound-RR}) that allow us to prove that minimizers are exactly balls if $m$ is big enough.\\
    The following inequality for the attractive term plays the role of the quantitative isoperimetric inequality (see \cite[Theorem 5]{franklieb19}):
    \begin{thm}\label{thm:power-ineq}
        Let $\alpha>0$ be fixed. There exists a constant $C=C(N,\alpha)>0$ such that for every function $h\in \mathcal{K}$ with $\norm{h}_{1}=m$ we have that
        \[
            \I_{\alpha}(h) \geq \I_{\alpha}(B[m])+Cm^{2+\alpha/N}A(h)^{2}.
        \]
    \end{thm}
    \begin{rem}\label{rem:asymmetry-to-zero}
        As a consequence, we have that if $m_{k}\to+\infty$ and $h_{k}$ minimizes $\GG$ with constrained volume $m_{k}$, then $A(h_{k})\to 0$. In fact, it is sufficient to apply a simplified version of \autoref{lemma:lipschitz-RR} to $h_1=h$ and $h_2=\Chi{B[m]}$ with $m=\norm{h}_1$. In this case, we define $h^+=h(1-\Chi{B[m]})$ and $h^-=\Chi{B[m]}(1-h)$, so that $h\wedge\Chi{B[m]}=h\Chi{B[m]}$, and we can rewrite the first line in \eqref{eq:RR-difference-identity} as
        \begin{equation}\label{eq:division-function}
            \begin{split}
                \RR(h)-\RR(B[m])&= \RR(h\Chi{B[m]})+\RR(h^{+})+2\RR(h\Chi{B[m]},h^{+})\\
                &\qquad\qquad\qquad\qquad-\RR(h\Chi{B[m]})-\RR(h^{-})-2\RR(h\Chi{B[m]},h^{-})\\
                &=2\RR(\Chi{B[m]},h^{+})-2\RR(\Chi{B[m]},h^{-})+\RR(h^{+})+\RR(h^{-})-2\RR(h^{-},h^{+}).
            \end{split}
        \end{equation}
        We notice that $h^+-h^-=h-\Chi{B[m]}$, and we can rewrite the last three terms as $\RR(h^+,h^+-h^-)+\RR(h^-,h^--h^+)$. Therefore, if $m_k\geq \omega_N$ and if we translate $h_k$ in order to have that $B[m_k]$ is the optimal ball in the definition of $A(h_k)$, then
        \begin{align*}
            |\RR(h_k)-\RR(B[m_k])|&\leq \int \left(2v_{B[m_k]}(x)+v_{h_k^+}(x)+v_{h_k^-}(x)\right)\left|h_k(x)-\Chi{B[m_k]}(x)\right|\dx\\
            &\leq \int Cm_k\left|h_k(x)-\Chi{B[m_k]}(x)\right|\dx\\
            &\leq Cm_k^2A(h_k),
        \end{align*}
        where we used the second part of \autoref{lemma:lipschitz-RR} to pass from the first line to the second. Then it is easy to conclude using the minimality of $h_k$ for $\GG$.
    \end{rem}

    It is useful to have a good diameter bound for the support of any minimizer of $\GG$ with fixed mass, that is contained in the following lemma (whose analogue in the paper is called Lemma 16). To abbreviate the notation, we will always call it \textit{diameter of }$h$ in place of ``diameter of the support of $h$".
    \begin{lemma}\label{lemma:diam-bound-pow}
        Let $\alpha>0$ be fixed and let $g:\R^N\setminus\{0\}\to[0,+\infty)$ be a function satisfying \HH. There exists a constant $C=C(\alpha,g,N)>0$ such that, for any minimizer $h\in\K$ of $\GG$ with constrained volume $\norm{h}_{1}=m\geq \omega_N$, we have that
        \begin{equation}\label{eq:diameter-bound}
            \diam h\leq C m^{1/N}.
        \end{equation}
    \end{lemma}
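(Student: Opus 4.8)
The plan follows the scheme of \cite[Lemma 16]{franklieb19}: first bound the energy of a minimizer $h$ by that of a ball, then deduce that $h$ is concentrated at the natural scale $m^{1/N}$, and finally exclude any outlying mass by an explicit competitor that redistributes it into the (very roomy) bulk. For the \emph{a priori energy bound}, since $B[m]$ has radius $(m/\omega_N)^{1/N}$ we get $\I_\alpha(\Chi{B[m]})\le m^2\big(2(m/\omega_N)^{1/N}\big)^\alpha=C(N,\alpha)\,m^{2+\alpha/N}$, while $\RR(\Chi{B[m]})=\int_{B[m]}v_{B[m]}\,\dx\le C(g)m\cdot m$ by the second part of \autoref{lemma:lipschitz-RR} (here $m\ge\omega_N$). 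By minimality $\GG(h)\le\GG(\Chi{B[m]})\le C\,m^{2+\alpha/N}$, hence also $\I_\alpha(h)\le C_1 m^{2+\alpha/N}$ since $\RR\ge0$.

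Next, the \emph{concentration step}. Some ball of radius $R:=C_2 m^{1/N}$ must carry mass $\ge m/2$: otherwise $\int_{\{|x-y|>R\}}h(y)\dy>m/2$ for every $x$, so $\I_\alpha(h)\ge R^\alpha\int h(x)\big(\int_{|x-y|>R}h(y)\dy\big)\dx>\tfrac12 C_2^\alpha m^{2+\alpha/N}$, contradicting the previous step once $C_2^\alpha>2C_1$. Fix such a $C_2$, further enlarged so that $\omega_N C_2^N\ge 2$, pick $z_0$ with $\int_{B(z_0,R_0)}h\,\dx\ge m/2$ where $R_0:=C_2 m^{1/N}$, and note that then $|B(z_0,R_0)|\ge 2m$, so the ``core'' $B(z_0,R_0)$ has much more room than $m$. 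Applying the same lower bound to $h$ restricted to $B(z_0,R_0)$ against $h$ restricted to the complement of $B(z_0,r)$ yields the decay estimate $\bar m(r):=\int_{\{|x-z_0|>r\}}h\,\dx\le 2C_1 m^{1+\alpha/N}(r-R_0)^{-\alpha}$ for $r>R_0$; in particular $\bar m(2R_0)\le\delta_0 m$ with $\delta_0:=2C_1/C_2^\alpha$.

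The heart is the \emph{competitor}. It suffices to show $\supp h\subseteq\overline{B(z_0,C_* m^{1/N})}$ for a suitable $C_*=C_*(\alpha,g,N)$, since then $\diam h\le 2C_* m^{1/N}$. Assume not; then, with $r_1:=C_* m^{1/N}/2$ (take $C_*\ge 4C_2$, so $r_1\ge 2R_0$), the mass $\tau$ of $h$ outside $B(z_0,r_1)$ is positive and $\tau\le\bar m(2R_0)\le\delta_0 m$. Let $h_{\mathrm{in}}$ be the restriction of $h$ to $B(z_0,r_1)$, $h_{\mathrm{out}}:=h-h_{\mathrm{in}}$, and set $\tilde h:=h_{\mathrm{in}}+\phi$, where $\phi$ satisfies $0\le\phi\le 1-h_{\mathrm{in}}$, $\supp\phi\subseteq B(z_0,R_0)$ and $\int\phi\,\dx=\tau$; such a $\phi$ exists because $\int_{B(z_0,R_0)}(1-h_{\mathrm{in}})\,\dx\ge|B(z_0,R_0)|-m\ge m\gg\tau$. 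Then $\tilde h\in\K$ with $\norm{\tilde h}_1=m$, so $\GG(\tilde h)\ge\GG(h)$. Discarding the nonnegative cross terms of $h$ and integrating the potentials against the small-mass factor $\phi$,
\[
\RR(\tilde h)-\RR(h)\le\RR(\phi)+2\RR(h_{\mathrm{in}},\phi)\le C(g)\,m\,\tau ,
\]
since $\supp\phi$ and $\supp h_{\mathrm{in}}$ lie in sets of measure $\le Cm$, so $v_\phi,v_{h_{\mathrm{in}}}\le C(g)m$ by \autoref{lemma:lipschitz-RR}. For the attractive part I would write $\I_\alpha(\tilde h)-\I_\alpha(h)=\I_\alpha(\phi)-\I_\alpha(h_{\mathrm{out}})+2\big(\I_\alpha(h_{\mathrm{in}},\phi)-\I_\alpha(h_{\mathrm{in}},h_{\mathrm{out}})\big)$, use $\I_\alpha(h_{\mathrm{out}})\ge 0$ and $\I_\alpha(\phi)\le(2R_0)^\alpha\tau^2$, and — crucially — split $h_{\mathrm{in}}$ into its restrictions to $B(z_0,2R_0)$ and to the annulus $\{2R_0<|x-z_0|\le r_1\}$, the latter of mass $\le\bar m(2R_0)\le\delta_0 m$. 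This gives $\I_\alpha(h_{\mathrm{in}},\phi)\le m\tau(3R_0)^\alpha+\delta_0 m\tau(2r_1)^\alpha$, while the $\ge m/2$ of mass in $B(z_0,R_0)$ and the separation $r_1-R_0\ge r_1/2$ give $\I_\alpha(h_{\mathrm{in}},h_{\mathrm{out}})\ge\tfrac m2\tau(r_1/2)^\alpha$. Collecting all terms, and using $r_1=C_* m^{1/N}/2$, $R_0=C_2 m^{1/N}$, $m\ge\omega_N$,
\[
\GG(\tilde h)-\GG(h)\le m^{1+\alpha/N}\tau\Big[K(C_2,g,N,\alpha)+\big(2\delta_0-2^{-2\alpha}\big)C_*^\alpha\Big];
\]
choosing $C_2$ so large that $\delta_0<2^{-2\alpha-1}$ makes the coefficient of $C_*^\alpha$ negative, and then $C_*$ so large (in terms of $C_2,g,N,\alpha$) makes the whole bracket negative, whence $\GG(\tilde h)<\GG(h)$, a contradiction.

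The only real obstacle lies in the competitor step, and it is the choice of where to cut: cutting at a radius $\sim m^{1/N}$ produces no usable gain, because the outlying mass may sit just outside the bulk and interact with it only weakly, so one is forced to cut far out, where the gain is of order $m\tau r_1^\alpha$; but then $h_{\mathrm{in}}$ itself reaches radius $r_1$, and the redistributed mass $\phi$ still interacts at distance $\sim r_1$ with the far part of $h_{\mathrm{in}}$ — which is exactly why that far part must be peeled off as an intermediate annulus and shown, via the decay estimate, to carry only $\delta_0 m$ of mass, with $\delta_0$ driven to $0$ by enlarging the concentration constant $C_2$. Everything else — the scaling of $\I_\alpha$, the harmlessness of the repulsive term (since $g$ is bounded at infinity and locally integrable, \autoref{lemma:lipschitz-RR} applies), and the measure-theoretic construction of $\phi$ — is routine bookkeeping.
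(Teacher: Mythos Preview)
Your argument is correct and takes a genuinely different route from the paper's. The paper proceeds by contradiction with a sequence: it cuts $h_k$ by a \emph{hyperplane} (not a sphere), discards the small piece of mass $\eta_k$, and then \emph{rescales} the remainder by $\lambda_k=(1-\eta_k/m_k)^{-1/N}>1$ to restore the mass; the scaling relations $\I_\alpha(u[\lambda])=\lambda^{2N+\alpha}\I_\alpha(u)$ and $\RR(u[\lambda])\le\lambda^{2N}\RR(u)$ (the latter using the dilation monotonicity $g(tx)\le g(x)$ built into \HH) immediately yield $m_k^{2}t_k^{\alpha}\le C\,\GG(h_k)\le C\,\GG(B[m_k])\le C\,m_k^{2+\alpha/N}$, which is incompatible with $t_k\gtrsim k\,m_k^{1/N}$. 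Your approach instead first locates a concentration ball carrying half the mass, establishes a decay estimate for the tail, and redistributes the outlying mass $\tau$ into the roomy core \emph{without} rescaling; the gain is the cross term $\I_\alpha(h_{\mathrm{in}},h_{\mathrm{out}})\gtrsim m\tau r_1^{\alpha}$, and the one delicate point---that $\phi$ still interacts at scale $r_1$ with the far part of $h_{\mathrm{in}}$---you handle correctly by peeling off an intermediate annulus of mass $\le\delta_0 m$ and driving $\delta_0\to0$ via the choice of $C_2$. The paper's route is shorter and exploits the homogeneity of $\I_\alpha$; yours is more hands-on but has the minor bonus of not invoking the dilation monotonicity of $g$ (only the uniform bound on $v_h$ from \autoref{lemma:lipschitz-RR}), and it yields the result directly rather than through a sequence.
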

    \begin{proof}
        We proceed by contradiction. Let us suppose that there exists a sequence of functions $h_{k}\in\K$ such that $d_{k}=\diam h_{k}>k \norm{h_{k}}_{1}^{1/N}$ and $h_{k}$ minimizes $\GG$ with volume constraint $m_{k}=\norm{h_{k}}_{1}\geq \omega_N$. For each $k$ we rotate the function $h_{k}$ in order to have that $d_{k}=\diam(\pi(\supp h_k))$ where $\pi:\R^N\to\R$ is the orthogonal projection on the first axis. We further translate it in order to have
        \[
            \int_{\{\scal{x}{e_1}<0\}}h_{k}(x)\dx=m_{k}/2.
        \]
        If necessary, we substitute $h_k$ with the function $\widetilde{h}_{k}$ defined as $\widetilde{h}_{k}((x_{1},x'))\coloneqq h_{k}((-x_{1},x'))$ for all $(x_{1},x')\in\R\times\R^{N-1}$, in order to have
        \[
            \sup\left\{\scal{x}{e_1} : x\in \supp h_{k}\right\}\geq d_{k}/2.
        \]
        Thus, we can choose a point $t_{k}\in(0,d_k/2)$ such that $0<\int_{\{x_{1}>t_{k}\}}h_{k}(z)\diff z<m_{k}/5$. Then we cut the functions with the hyperplane $\{\scal{x}{e_1}=t_{k}\}$ and compare them with the original ones: if $u_{k}=h_{k}\Chi{\{x_1<t_{k}\}}$, $\eta_{k}=\int_{\{x_{1}>t_{k}\}}h_{k}(x)\dx$ and $\lambda_{k}= (1-\eta_{k}/m_k)^{-1/N}$, then
        \begin{equation}\label{eq:inutile19}
            \begin{split}
                \GG(h_{k})\leq \GG(u_{k}[\lambda_{k}]) &= \mathcal{I}_{\alpha}(u_{k}[\lambda_{k}]) + \RR(u_{k}[\lambda_{k}])\\
                &\leq \lambda_{k}^{2N+\alpha}\mathcal{I}_{\alpha}(u_{k}) + \lambda_{k}^{2N}\RR(u_{k})\\
                &\leq\lambda_{k}^{2N+\alpha}\mathcal{I}_{\alpha}(h_{k}) - \lambda_{k}^{2N+\alpha}\frac{m_{k}}{2}\eta_{k}t_{k}^{\alpha}+\lambda_{k}^{2N}\RR(h_{k}).
            \end{split}
        \end{equation}
        Rearranging that inequality we obtain the following relation:        
        \[
            (\lambda_{k}^{2N+\alpha}-1)\mathcal{I}_{\alpha}(h_{k})+(\lambda_{k}^{2N}-1)\RR(h_{k})\geq \lambda_{k}^{2N+\alpha}\frac{m_{k}\eta_{k}t_{k}^{\alpha}}{2}\geq\frac{m_{k}^{2}t_{k}^{\alpha}}{2}\frac{\eta_{k}}{m_{k}}.
        \]
        Using that $\eta_{k}/m_{k}\leq 1/5$ we can apply the (local) Lipschitz property of the factors in the left hand side to arrive to the following inequality, where we already simplified the positive factor $\eta_{k}/m_{k}$:
        \begin{equation*}
            m_{k}^{2}t_{k}^{\alpha}\leq C(N,\alpha)\mathcal{I}_{\alpha}(h_{k})+C(N)\RR(h_{k}).
        \end{equation*}
        We assumed by contradiction that $d_{k}>km_{k}^{1/N}$, so using the previous inequality with $3t_k>km_k^{1/N}$ and comparing the energy of $h_{k}$ with $\GG(B[m_k])$ we obtain that
        \begin{equation}\label{eq:inutile16}
            k^{\alpha}m_{k}^{2+\alpha/N}\leq C(N,\alpha)\GG(h_{k}) \leq C(N,\alpha)\GG(B[m_{k}]).
        \end{equation}
        We can apply \autoref{lemma:lipschitz-RR} since $m_k\geq \omega_N$ and see that
        \begin{equation}\label{eq:inutile20}
            \GG(B[m_{k}])=\mathcal{I}_{\alpha}(B[m_{k}])+\RR(B[m_{k}]) \leq \left(\diam(B[m_{k}])\right)^{\alpha}m_{k}^{2}+C(g)m_{k}^{2}.
        \end{equation}
        We can plug this inequality into \eqref{eq:inutile16} and obtain that $k^{\alpha}m_{k}^{2+\alpha/N}\leq C(\alpha,g,N)m_{k}^{2+\alpha/N}$, that cannot hold if $k$ is large enough. Therefore we arrived to a contradiction, and then the thesis holds with a constant $C=C(\alpha,g,N)>0$.
    \end{proof}
    \begin{rem}
        Notice that the previous proof provides an explicit upper bound for the diameter that could be expressed in terms of the other constants appearing in the various inequalities.
    \end{rem}
    \begin{rem}\label{rem:diam-bound-GG}
        With the previous computations we can see that, even if we do not have a priori a minimizer, we know that the only possible candidates have the diameter bound resulting from the lemma. In fact, the only energy comparison that we made was $\GG(h)\leq \GG(B[m])$. Thus, it is sufficient to consider a candidate $h$ with that energy inequality and cut it with the proper hyperplanes to find a better competitor that satisfies \eqref{eq:diameter-bound}.
    \end{rem}
    \begin{thm}\label{thm:existence-GG}
        Let $m>0$ and $\alpha>0$ be assigned and let $g:\R^N\setminus\{0\}\to[0,+\infty)$ be a function satisfying \HH. Then there exists a minimizer $h\in \K$ of $\GG$ with constrained volume $\norm{h}_{1}=m$.
    \end{thm}
    \begin{proof}
        Let us take a minimizing sequence $h_{k}\in\K$ for $\GG$ with $\norm{h_{k}}_{1}=m$. Thanks to \autoref{rem:diam-bound-GG} we can suppose without loss of generality that $\supp h_{k}\in B(0,R)$ with $R>0$ being a constant depending on $m$, $N$, $g$ and $\alpha$. Since $|h_{k}|\leq 1$, then up to subsequences $h_k\xrightharpoonup{*}h$ in $L^{\infty}$ for some $h\in L^{\infty}$. Moreover, also $H_k(x,y)=h_k(x)h_k(y)$ weakly-$*$ converge to $H(x,y)=h(x)h(y)$ since we can approximate strongly in $L^1$ any test function $\phi\in L^1(\R^N\times\R^N)$ with functions of the form $\sum_{i\in J}\phi_{1,i}(x)\phi_{2,i}(y)$ with $J$ finite. This is sufficient to conclude using a standard lower semicontinuity argument because the map $(x,y)\mapsto |x-y|^{\alpha}+g(x-y)$ is of class $L^1_{loc}(\R^N\times \R^N)$ and $\supp H_k\subset B(0,R)\times B(0,R)$. Notice that the boundedness of the supports also guarantees that the $L^1$ constraint is satisfied by $h$.   
    \end{proof}
    
    \begin{rem}\label{rem:results-very-singular}
    	One can notice that \autoref{rem:asymmetry-to-zero}, \autoref{lemma:diam-bound-pow} and \autoref{thm:existence-GG} hold without the stronger integrability condition stated in \HI. This ensures that, even if $g(x)=|x|^{-\lambda}$ with $N-1\leq \lambda<N$, we still have that the minimizers exist\footnote{And they could possibly be different from characteristic functions of sets.}, they have bounded diameter and they converge in $L^1$ to a ball as the volume constraint goes to $+\infty$.
    \end{rem}

    The next proposition is the analogue of \cite[Proposition 7]{franklieb19}. Our result is weaker, nevertheless it is sufficient to make the successive arguments work. Notice that our proof is different and more robust since it uses much fewer properties of the repulsive kernel. This result is in the same spirit of \autoref{prop:frac-sobolev-bound-R} since in both cases we compare the energy of competitor that is close to a ball in a suitable sense with the energy of the ball itself.

    \begin{prop}\label{prop:low-bound-RR}
        Let $g:\R^N\setminus\{0\}\to[0,+\infty)$ be a function satisfying \HI. There exists $C(g,N)>0$ such that, for every $\theta\in[0,1/3]$ and for every function $h\in\K$ with
        \[\norm{h}_{1}=m\geq\omega_{N}\qquad \text{and}\qquad\Chi{(1-\theta)B[m]}\leq h\leq\Chi{(1+\theta)B[m]},\]
        we have that $\RR(h)\geq\RR(B[m])-C(g,N)m^{2}\theta^{2}$.
    \end{prop}
    \begin{proof}
        We denote by $R$ the radius of $B[m]$ (notice that $R\geq 1$ since $m\geq \omega_{N}$) and we rewrite the quantity $\RR(h)-\RR(B[m])$ as in \eqref{eq:division-function}. This is useful because $h$ and $\Chi{B[m]}$ are very close, and this produces many cancellations.
        For the sake of brevity we denote by $\mu^+$ and $\mu^-$ respectively the measures $\mu^{+}\coloneqq h^{+}\lebesgue^{N}$ and $\mu^{-}\coloneqq h^{-}\lebesgue^{N}$, where $h^+$ and $h^-$ are the functions appearing in \eqref{eq:division-function}. Then using the function $\psi$ defined in \eqref{eq:psi} we have that
        \begin{equation*}
            \begin{split}
                \RR(h)-\RR(B[m]) &\geq 2\int_{\R^{N}}\psi(R,|x|)\diff\mu^{+}(x)-2\int_{\R^{N}}\psi(R,|x|)\diff \mu^{-}(x)-\RR(h^{+}\!+h^{-})\\
                &=2\int(\psi(R,|x|)-\psi(R,R))\diff\mu^{+}\!-2\int(\psi(R,|x|)-\psi(R,R))\diff\mu^{-}\!-\RR(h^{+}\!+h^{-}),
            \end{split}
        \end{equation*}
        where we used that $\int h^{+}(x)\diff x=\int h^{-}(x)\diff x$. We concentrate ourselves on the last term: if $A=B(0,(1+\theta)R)\setminus B(0,(1-\theta)R)$, then $\RR(h^{+}+h^{-}) \leq \RR(A) = \int_{A}v_{A}(x)\diff x$. By symmetry we can estimate $v_{A}(x)$ only for points of the form $x=se_{1}\in A$. Moreover, we clearly have that $v_{A}(x) = v_{A\setminus Q}(x)+v_{A\cap Q}(x)$, where $Q=Q_{N}(x,1/2)$. We can easily bound the volume of $A$ using that $\theta\in[0,1/3]$:
        \begin{equation}\label{eq:inutile13}
            |A|=|B(0,(1+\theta)R)|-|B(0,(1-\theta)R)| = \omega_{N}\left((1+\theta)^{N}-(1-\theta)^{N}\right)R^{N}\leq C_{N}\theta R^{N},
        \end{equation}
        and then we have that $v_{A\setminus Q}(x)\leq g(1/2)C_{N}\theta R^{N}$. Moreover it is immediate to see that, if we denote any point $x\in\R^{N}$ by $x=(x_{1},x')\in\R\times\R^{N-1}$, then the bound on $\theta$ implies that
        \[
            \haus^{1}(A\cap Q\cap\{x' = y'\})\leq C_{N}\theta R\qquad\forall y'\in\R^{N-1}.
        \]
        Hence, defining $d=C_{N}\theta R$ to be the right hand side of the previous inequality, we can use the monotonicity of $g$ to see that
        \begin{align*}
            v_{A\cap Q}(x) &= \int_{A\cap Q}g(x-y)\dy \leq \int_{Q_{N-1}(0,1/2)}\int_{-d/2}^{d/2}g((s,y'))\diff s\diff y'\\
            &\leq d\int_{Q_{N-1}(0,1/2)}g((0,y'))\diff y' \leq C(g,N)\theta R,
        \end{align*}
        where we used that $g\in L^1_{loc}(\R^{N-1})$ in the last inequality. Combining it with \eqref{eq:inutile13} and using that $R\geq 1$ we have that $\RR(h^{+}+h^{-})\leq \RR(A) \leq C(g,N)\theta^{2}R^{2N}$.\\
        With very similar computations, taking translations of balls instead of dilations, one can see that
        \[
            |\psi(R,|x|)-\psi(R,R)|\leq C(g,N)\theta R(1+R^{N-1})\qquad \forall x\in A,
        \]
        where one needs to use the $(N-1)$-dimensional version of the $L^{\infty}$ bound present in \autoref{lemma:lipschitz-RR} that holds because, as we observed at the beginning of this section, $g\in L^1_{loc}(\R^{N-1})$:
        \begin{equation}\label{eq:strong-uniform-bound-v}
            \int_{E'}g(x)d\haus^{N-1}(x) \leq C(g)\haus^{N-1}(E')\qquad \forall E'\subset\R^{N-1}\times\{0\}\text{ with }\haus^{N-1}(E')\geq \omega_{N-1}.
        \end{equation}
        Putting together the inequalities for all of the terms, using that $R\geq 1$ and exploiting again \eqref{eq:inutile13} together with the fact that $(\supp h^{+}\cup\supp h^{-})\subset A$, we arrive to the conclusion:
        \[
            \RR(h)-\RR(B[m])\geq -C(g,N)\theta^{2}R^{2N}=-C(g,N)\theta^{2}m^{2}.
        \]
    \end{proof}
    The following is a simple technical lemma concerning the function $\Phi_m$ defined in \eqref{eq:potentials}:
    \begin{lemma}\label{lemma:growth}
        Let $\alpha>0$ be fixed and let $g:\R^N\setminus\{0\}\to[0,+\infty)$ be a function satisfying \HI. There exist two constants $m_{0}=m_{0}(\alpha,g,N)>0$ and $C=C(\alpha,g,N)>0$ such that for every $m\geq m_{0}$ we have that
        \begin{gather}\label{eq:weak-monotonicity-Phi}
            \Phi_{m}(r)\leq \Phi_{m}(R) \quad\text{if }r\leq R\qquad \text{and}\qquad\Phi_{m}(r)\geq \Phi_{m}(R)\quad\text{if }r\geq R,\\
            \label{eq:lower-bound-lip-Phi}
            |\Phi_{m}(r)-\Phi_{m}(R)|\geq C R^{N+\alpha-1}\min\{|r-R|,R\} \qquad\forall r\geq0,
        \end{gather}
         where $R$ is the radius of $B[m]$.
    \end{lemma}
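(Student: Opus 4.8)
The plan is to write $\Phi_m=\varphi_m+\psi(R,\cdot)$, where $R\coloneqq(m/\omega_N)^{1/N}$ is the radius of $B[m]$, $\varphi_m(r)\coloneqq\int_{B[m]}|re_1-y|^\alpha\dy=R^{N+\alpha}\varphi(r/R)$ by the scaling $y\mapsto Ry$, and $\psi(R,\cdot)$ is the repulsive potential of $B[m]=B(0,R)$ from \eqref{eq:psi}. Both assertions will follow once we establish two facts: that the attractive part increases through $r=R$ at a definite rate, namely $|\varphi_m(r)-\varphi_m(R)|\geq cR^{N+\alpha-1}\min\{|r-R|,R\}$ with $\varphi_m(r)-\varphi_m(R)$ of the sign of $r-R$; and that the repulsive part is a lower-order perturbation, $|\psi(R,r)-\psi(R,R)|\leq C(g,N)R^{N-1}\min\{|r-R|,R\}$ for $m$ large and $r$ not too far from $R$. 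Since $\alpha>0$ we have $R^{N+\alpha-1}\gg R^{N-1}$ when $m\gg1$, so the attractive term dominates, and choosing $m_0$ accordingly yields both \eqref{eq:weak-monotonicity-Phi} and \eqref{eq:lower-bound-lip-Phi} with $C=C(\alpha,g,N)$.

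For the attractive part I would first record that $x\mapsto\int_B|x-y|^\alpha\dy$ is radial and strictly subharmonic, because $\Delta|x|^\alpha=\alpha(\alpha+N-2)|x|^{\alpha-2}>0$ for $N\geq2$ and $\alpha>0$; hence its spherical means, which by radiality equal $\varphi$, are strictly increasing, so $\varphi$ is continuous and strictly increasing on $[0,\infty)$. For the rate near $t=1$ I would argue by hand: for $t\in[1,2]$,
\[
\varphi(t)-\varphi(1)=\int_B\int_1^t\alpha(s-y_1)|se_1-y|^{\alpha-2}\diff s\dy\geq\int_{B(0,1/2)}\int_1^t\alpha(s-y_1)|se_1-y|^{\alpha-2}\diff s\dy\geq c_1(N,\alpha)(t-1),
\]
since on $B(0,1/2)\times[1,2]$ one has $s-y_1\geq1/2$ and $|se_1-y|$ bounded above and below by positive constants, while the integrand is nonnegative on all of $B\times[1,t]$; the case $t\in[1/2,1]$ is analogous (integrating over $B(0,1/4)$). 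For $t\geq2$ or $t\leq1/2$, monotonicity gives $|\varphi(t)-\varphi(1)|\geq\min\{\varphi(2)-\varphi(1),\varphi(1)-\varphi(1/2)\}\eqqcolon c_2(N,\alpha)>0$. Multiplying by $R^{N+\alpha}$ and putting $t=r/R$ gives, for every $r\geq0$, the claimed lower bound on $|\varphi_m(r)-\varphi_m(R)|$ with $\min\{|r-R|,R\}$ on the right, and the correct sign.

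For the repulsive part I would invoke the estimate obtained inside the proof of \autoref{prop:low-bound-RR}: for $|r-R|\leq R/3$ it yields $|\psi(R,r)-\psi(R,R)|\leq C(g,N)\tfrac{|r-R|}{R}R(1+R^{N-1})\leq C(g,N)R^{N-1}|r-R|$ using $R\geq1$; for $R/3<|r-R|$ and $r\leq2R$ the crude bound $\psi(R,r)=\int_{B(re_1,R)}g\leq\int_{B(0,3R)}g\leq C(g,N)R^N$ (splitting at $R_g$ and using \HH\ as in \autoref{lemma:lipschitz-RR}) gives $|\psi(R,r)-\psi(R,R)|\leq C(g,N)R^N\leq3C(g,N)R^{N-1}|r-R|$; for $r>2R$ one only needs $\psi(R,R)\leq C(g,N)R^N$. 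Combining the two ingredients and splitting into the ranges $r\in[R/2,2R]$ (where $\varphi(r/R)-\varphi(1)$ is comparable to $r-R$), $r\in[0,R/2)$ and $r>2R$ (where it is bounded away from $0$), a short case check shows, once $m\geq m_0(\alpha,g,N)$, that $\Phi_m(r)-\Phi_m(R)$ has the sign of $r-R$ and $|\Phi_m(r)-\Phi_m(R)|\geq\tfrac12\min\{c_1,c_2\}R^{N+\alpha-1}\min\{|r-R|,R\}$, which is precisely \eqref{eq:weak-monotonicity-Phi}–\eqref{eq:lower-bound-lip-Phi}. The only genuinely delicate point is the attractive rate estimate, i.e.\ that the ball-potential of $|x|^\alpha$ increases at rate $\asymp R^{N+\alpha-1}$ across $r=R$; everything else is the same bookkeeping as in Frank--Lieb, the generality of $g$ entering only through the requirement that the repulsive contribution stay one power of $R$ below the attractive one.
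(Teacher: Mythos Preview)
Your approach is essentially the paper's: split $\Phi_m$ into the attractive piece $R^{N+\alpha}\varphi(r/R)$ and the repulsive piece $\psi(R,r)$, show the first grows at a definite rate across $t=1$ while the second has oscillation of lower order in $R$, and absorb. The paper re-derives the Lipschitz bound on $\psi(R,\cdot)$ for $|r-R|\leq R/3$ inside this proof rather than quoting the estimate from \autoref{prop:low-bound-RR}, but the computation is identical; your case split and the crude bounds for $|r-R|>R/3$ also match.

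There is one small slip. Your ``analogous'' argument for $t\in[1/2,1]$ does not go through as written: for $s<1$ the integrand $\alpha(s-y_1)|se_1-y|^{\alpha-2}$ is \emph{negative} on the portion of $B$ with $y_1>s$, so you cannot discard $B\setminus B(0,1/4)$ as you did when $s\geq1$. The paper handles this by writing $\varphi'(t)=\alpha\int_{te_1-B}\langle x,e_1\rangle|x|^{\alpha-2}\dx$ and using the symmetry of $B$ to see that this is strictly positive for every $t>0$, hence bounded below by some $C(\alpha)>0$ on $[1/3,4/3]$ by continuity and compactness. Your subharmonicity route can also be completed to the same conclusion: from $\Delta u>0$ and radiality one gets $(r^{N-1}\varphi'(r))'>0$, whence $\varphi'(r)>0$ for all $r>0$; but note this needs $N\geq2$, while the paper's symmetry argument covers all $N$. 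Either fix is immediate, so the gap is cosmetic rather than structural.
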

    \begin{proof}
        First of all we change variable in the definition of $\Phi_m$:
        \[
            \Phi_{m}(r) = R^{N+\alpha}\int_{B}\left|\frac{r}{R}e_{1}-x\right|^{\alpha}\diff x + R^{N}\int_{B}g\left(re_{1}-Rx\right)\diff x.
        \]
        Now we take $m_{0}\geq \omega_{N}$ (thus $R\geq 1$) and we see that the following inequalities hold true
        \begin{equation}\label{eq:inutile14}
            \begin{split}
                \Phi_{m}(r)-\Phi_{m}(R) &= R^{N+\alpha}(\varphi(r/R)-\varphi(1))\\
                &\qquad+ R^{N}\int_{B}\left[g\left(R\left(\frac{r}{R}e_{1}-x\right)\right)-g(R(e_{1}-x))\right]\diff x\\
                &\geq R^{N+\alpha}(\varphi(r/R)-\varphi(1)) - C(g)R^{N},
            \end{split}
        \end{equation}
        and in the same way
        \[
            \Phi_{m}(R)-\Phi_{m}(r) \geq R^{N+\alpha}(\varphi(1)-\varphi(r/R))-C(g)R^{N}.
        \]
        Moreover, using the change of variables $x=te_{1}-y$, it is easy to see that
        \[
            \varphi'(t) = \alpha\int_{B}(t-\scal{y}{e_{1}})|te_{1}-y|^{\alpha-2}\diff y = \alpha\int_{te_{1}-B}\scal{x}{e_{1}}|x|^{\alpha-2}\diff x,
        \]
        and therefore using the symmetry of $B$ we have that $\varphi$ is of class $C^{1}$, with strictly positive derivative at each point $t>0$. We will denote by $C(\alpha)>0$ a constant such that $\varphi'(t)>C(\alpha)$ for all $t\in[1/3,4/3]$. From the previous observations it follows immediately that both \eqref{eq:weak-monotonicity-Phi} and \eqref{eq:lower-bound-lip-Phi} are valid for $|r-R|\geq R/3$ if we take  $m_{0}$ big enough to have that
        \[
            C(g)\leq \frac{1}{2}R^{\alpha}\min\{\varphi(4/3)-\varphi(1),\varphi(1)-\varphi(1/3)\}.
        \]
        Now we concentrate ourselves on the case $|r-R|\leq R/3$. We treat more carefully the repulsive terms in \eqref{eq:inutile14}, that coincide with
        \begin{equation}\label{eq:inutile15}
            D\coloneqq\int_{re_{1}-B[m]}g(x)\diff x - \int_{Re_{1}-B[m]}g(x)\diff x.
        \end{equation}
        Let us define $\theta=(r-R)/R$, $E=Re_{1}-B[m]$ and let $l=\Span\{e_{1}\}$. Looking at \autoref{figure:2} to better understand the situation, it is immediate to see that
        \[
            \haus^{1}((E\symmdiff (\theta Re_{1}+E))\cap (x+l))\leq 2|\theta| R \qquad \forall x\in\R^{N}.
        \]
        Hence we use that $g\in L^1_{loc}(\R^{N-1})$ to get
        \[
            |D|\leq (2N-2)\omega_{N-2}|\theta| R\int_{-1}^{1}g(se_{1})s^{N-2}\diff s + g(e_{1})|E\symmdiff (\theta Re_{1}+E)|\leq C(g,N)|\theta|(R +R^{N}),
        \]
        \begin{figure}[H]
            \begin{minipage}[c]{0.4\textwidth}
                \includegraphics[width=0.9\textwidth]{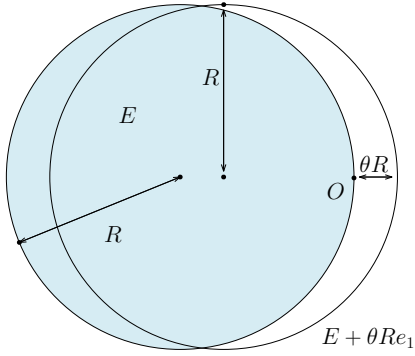}
            \end{minipage}\hfill
            \begin{minipage}[c]{0.5\textwidth}
                \caption{The highlighted region corresponds to the set $E$, while $O$ is the origin of the coordinates used in \eqref{eq:inutile15}.}
                \label{figure:2}
            \end{minipage}
        \end{figure}
        where we used twice the cylindrical coordinates around the $e_1$ axis. Hence we obtain both \eqref{eq:weak-monotonicity-Phi} and \eqref{eq:lower-bound-lip-Phi} if we plug this inequality for $|D|$ into the first line of \eqref{eq:inutile14} and use that $\varphi'(t)\geq C(\alpha)>0$ for $t\in[1/3,4/3]$.
    \end{proof}

    The key estimate, where the integrability property required in \HI\ is fully used, is contained in the following lemma (which is labelled as Lemma 15 in Frank and Lieb's article). With this lemma we gain control on $v_h$ out of a geometric constraint on the support of $h$.
    \begin{lemma}\label{lemma:bound-annulus}
        Let $g:\R^N\setminus\{0\}\to[0,+\infty)$ be a function satisfying \HI. There exists a positive constant $C(g,N)$ such that, for any $R\geq 1$, any $\theta\in[0,1]$ and any function $h\in\K$ that satisfies $\supp h\subset B(0,(1+\theta)R)\setminus B(0,(1-\theta)R)$, we have that
        \begin{equation}\label{eq:inutile}
            \norm{v_h}_{\infty}=\sup_{x\in\R^{N}}\int g(x-y)h(y)\dy \leq C(g,N)\theta R^N.
        \end{equation}
    \end{lemma}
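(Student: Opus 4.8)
The plan is to bound $v_h(x)$ pointwise, uniformly in $x\in\R^N$, using only the pointwise inequality $0\le h\le\Chi{A}$ where $A\coloneqq B(0,(1+\theta)R)\setminus B(0,(1-\theta)R)$ is the annulus containing $\supp h$; since $g\ge 0$ this gives $v_h(x)\le v_A(x)=\int_A g(x-y)\dy$, and it suffices to bound the latter by $C(g,N)\theta R^N$. The first step is to discard the ``thick annulus'' regime $\theta\in(1/4,1]$ cheaply: there $\supp h\subset B(0,2R)$ and $|B(0,2R)|=\omega_N 2^N R^N\ge\omega_N$, so the second part of \autoref{lemma:lipschitz-RR} yields $v_h\le v_{B(0,2R)}\le C(g)\omega_N 2^N R^N$, and this is $\le C(g,N)\theta R^N$ because $\theta>1/4$ forces $R^N\le 4\theta R^N$. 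From now on I would assume $\theta\in[0,1/4]$, so that $(1-\theta)R\ge 3/4$, fix $x$, and (rotating the configuration) take $x=se_1$ with $s=|x|\ge 0$.

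For $\theta\le 1/4$ I would write $v_A(x)=\int_{x-A}g(z)\diff z$, where $x-A$ is the annulus of radii $(1-\theta)R$ and $(1+\theta)R$ centered at $x$, and split this integral over $B(0,1/2)$ and its complement. On $\R^N\setminus B(0,1/2)$ the radial monotonicity of $g$ gives $g(z)\le g(1/2)<+\infty$, so that contribution is at most $g(1/2)\,|x-A|=g(1/2)\,|A|$, and the elementary bound $|A|=\omega_N\bigl((1+\theta)^N-(1-\theta)^N\bigr)R^N\le C_N\theta R^N$ (mean value theorem) makes it $\le C(g,N)\theta R^N$. For the piece over $B(0,1/2)$ I would slice in the $e_1$ direction: writing $z=(z_1,z')\in\R\times\R^{N-1}$, for fixed $z'$ with $|z'|<1/2$ the slice $\{z_1:(z_1,z')\in x-A\}$ is a union of intervals of total length $2(\sqrt a-\sqrt b)$ with $a=(1+\theta)^2R^2-|z'|^2$ and $b=(1-\theta)^2R^2-|z'|^2$; here $b>0$ since $|z'|<1/2<(1-\theta)R$, and a one-line estimate using $R\ge 1$ gives $\sqrt b\ge\frac{\sqrt5}{4}R$, hence the slice has length $\le\frac{2(a-b)}{\sqrt a+\sqrt b}=\frac{8\theta R^2}{\sqrt a+\sqrt b}\le C_N\theta R$. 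Combining this with $g((z_1,z'))\le g((0,z'))$ and applying Fubini, $\int_{(x-A)\cap B(0,1/2)}g\le C_N\theta R\int_{\{|z'|<1/2\}}g((0,z'))\diff z'$, and the integral on the right is finite precisely because \HI\ (via radiality) gives $g\in L^1_{loc}(\R^{N-1})$ — this is where \HI\ is used beyond \HH. Since $R\ge 1$ both pieces are $\le C(g,N)\theta R^N$, proving \eqref{eq:inutile}.

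The one genuinely delicate point is the geometric slice-length estimate: one must choose the slicing direction so that every slice line passes through the interior of the hole $B(x,(1-\theta)R)$ — i.e.\ is never tangent to it — because a tangent line would produce a slice of length of order $\sqrt\theta\,R$ near the ``equator'' of $x-A$, which is too large. Taking the slicing direction to be $e_1$ (the line through $x$ and the origin) and restricting to $z'\in B^{N-1}(0,1/2)$ makes this automatic as soon as $1/2<(1-\theta)R$, which is exactly what the reduction to $\theta\le 1/4$ (together with $R\ge1$) provides. Everything else — the mean value estimate for $|A|$, the monotonicity bound $g(z)\le g(1/2)$, and the Fubini step — is routine and of exactly the type already carried out in the proofs of \autoref{prop:low-bound-RR} and \autoref{lemma:growth}.
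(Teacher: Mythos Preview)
Your proof is correct and somewhat more direct than the paper's. Both arguments first dispose of the ``thick'' regime cheaply --- you by cutting at $\theta>1/4$ and invoking the second part of \autoref{lemma:lipschitz-RR} applied to $\Chi{B(0,2R)}$, the paper by cutting at $|A|>\epsilon_N$ and using the rearrangement bound $v_h(x)\le\int_{B[|A|]}g$ --- and both then exploit radial monotonicity together with $g\in L^1_{loc}(\R^{N-1})$ for the thin annulus. The difference is in how the thin case is handled: the paper covers $\partial B(0,R)$ by a finite family of $(N-1)$-cubes $q_j$, encloses each curved piece $A\cap D_j$ in a flat slab $F_j$ of thickness $C_N\theta R$, and bounds $\int_{F_j}g(x-\cdot)$ by the centered slab integral $\int_{[-R/2,R/2]^{N-1}}\int_{-C_N\theta R}^{C_N\theta R}g((y',t))\,\diff t\,\diff y'$, summing over the $K_N$ pieces. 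You instead translate so that $x-A$ is an annulus centered at $x=se_1$, split at $|z|=1/2$, and observe that inside $B(0,1/2)$ every $e_1$-slice of $x-A$ has length at most $C_N\theta R$ because the slicing lines all pass well through the inner ball (their distance to the center is $|z'|<1/2<(1-\theta)R$). Your route avoids the covering machinery entirely and is essentially the same computation already carried out in the proof of \autoref{prop:low-bound-RR} for $v_{A\cap Q}$; the paper's covering argument is more pictorial but requires more setup, and in exchange makes no use of the position of $x$.
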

    \begin{rem}
        Our proof goes on quite like that of Frank and Lieb's lemma, but we provide a very rough estimate, where $h$ does not to appear explicitly in the right hand side. Besides this inequality might seem very bad, notice that if we take $h=\Chi{B(0,(1+\theta)R)}-\Chi{B(0,(1-\theta)R)}$ then we see that the bound must be linear in $\theta$ for $\theta\to0$: the left hand side of \eqref{eq:inutile} is larger than $\int g(y)h(y)\dy$, that is larger than $C_Ng((1+\theta)R)\theta R^N$ for some dimensional constant $C_N>0$.
    \end{rem}
    \begin{proof}
        We define the annulus $A\coloneqq B(0,(1+\theta)R)\setminus B(0,(1-\theta)R)$, and since $|A|=\omega_NR^N((1+\theta)^N-(1-\theta)^N)$ we notice that there exists a constant $C_N>0$ such that $C_N^{-1}|A|\leq \theta R^N\leq C_N|A|$. Without loss of generality we can suppose that $|A|\leq \epsilon_N$ for every fixed $\epsilon_N<\omega_N$: if the other case holds, then we denote by $r_N$ the radius of $B[\epsilon_N]$, and we get
        \begin{equation}\label{eq:big-A}
            \begin{split}
                \sup_{x\in\R^{N}}\int g(x-y)h(y)\dy &\leq \int_{B[|A|]}g(y)\dy = \int_{B[\epsilon_N]}g(y)\dy + \int_{B[|A|]\setminus B[\epsilon_N]}g(y)\dy\\
                &\leq C(g) + g(r_Ne_1)|A|\\
                &= \frac{C(g)}{|A|}|A|+g(r_Ne_1)|A|\\
                &\leq \left(\frac{C(g)}{\epsilon_N} + g(r_Ne_1)\right)|A|,
            \end{split}
        \end{equation}
        that is the desired result since $|A|\leq C_N\theta R^N$. The value of $\epsilon_N$ will be fixed later, but it is important to keep in mind that $|A|$ can be taken arbitrarily small. Thus, we need to prove \eqref{eq:inutile} exploiting the particular shape of $A$. In the end, it is sufficient to estimate the following quantity:
        \[
        	S\coloneqq \int_{[-R/2,R/2]^{N-1}}\int_{[-C_N\theta R,C_N\theta R]}g((y',t))\diff y'\diff t,
        \]
        where $C_N>0$ is a geometric constant. In fact, by compactness there exist a constant $K_N>0$ and a family $\{q_1,\ldots,q_{K_N}\}$ of $(N-1)$-dimensional cubes embedded in $\R^N$ such that
        \begin{itemize}
        	\item the center $c_j$ of $q_j$ belongs to $\bdry B(0,R)$ for all $j$;
        	\item their sides have length $R/2$;
        	\item for every $1\leq j\leq K_N$ we have that $q_j\cap \Int(B(0,R)) = \emptyset$;
        	\item if $D_j = \{tc_j+y:t>-1, y\in q_j\}$ and $\pi_j^{\perp}$ is the orthogonal projection onto $\Span\{c_j\}^{\perp}$, then we define the map $\pi_j:D_j\to\R^N$ as
        	\[
                \pi_j(x) = \pi_j^{\perp}(x-c_j)+c_j\sqrt{1-\frac{|\pi^{\perp}(x-c_j)|^2}{R^2}},
            \]
        	so that $\bigcup_{j=1}^{K_N}\pi_j\left(q_j\right) = \bdry B(0,R)$, namely they ``cover'' $\bdry B(0,R)$. Notice that the map $\pi_j$ is just pushing the points of $D_j$ onto $\bdry B(0,R)$ as shown in the left picture in \autoref{figure:covering-sphere}.
        \end{itemize}
        Then, thanks to the positivity of $g$ we can replace the ``curved slabs'' $A\cap D_j$ with some flat slabs $F_j$ (the smallest $N$-dimensional rectangle containing $A\cap D_j$ with sides parallel or orthogonal to $q_j$):
        \begin{equation}\label{eq:inutile30}
        	\begin{split}
        		\int g(x-y)h(y)\dy&\leq \sum_{j=1}^{K_N}\int_{D_j\cap A} g(x-y)h(y)\dy\\
        		&\leq \sum_{j=1}^{K_N}\int_{F_j} g(x-y)\dy\\
        		&\leq K_N \int_{[-R/2,R/2]^{N-1}}\int_{[-C_N\theta R,C_N\theta R]}g((y',t))\diff y'\diff t,
        	\end{split}
        \end{equation}
        where we used the monotonicity of $g$ to pass from the second to the third line. We also highlight that $F_j$ has thickness smaller than $C_N\theta R$ for some constant $C_N$ (see \autoref{figure:covering-sphere}, on the right). In fact, if $\theta\leq 1/10$ this is clearly true, and we know that $\theta\leq C_N|A|/R^N$. Since $R\geq 1$ and $|A|\leq \epsilon_N$, we can choose $\epsilon_N$ so that $\theta\leq 1/10$.
        Then from \eqref{eq:inutile30} it is clear that we need only to control the quantity $S$ defined before.
        \begin{figure}[H]
            \begin{minipage}[c]{0.35\textwidth}
                \includegraphics[width=0.95\textwidth]{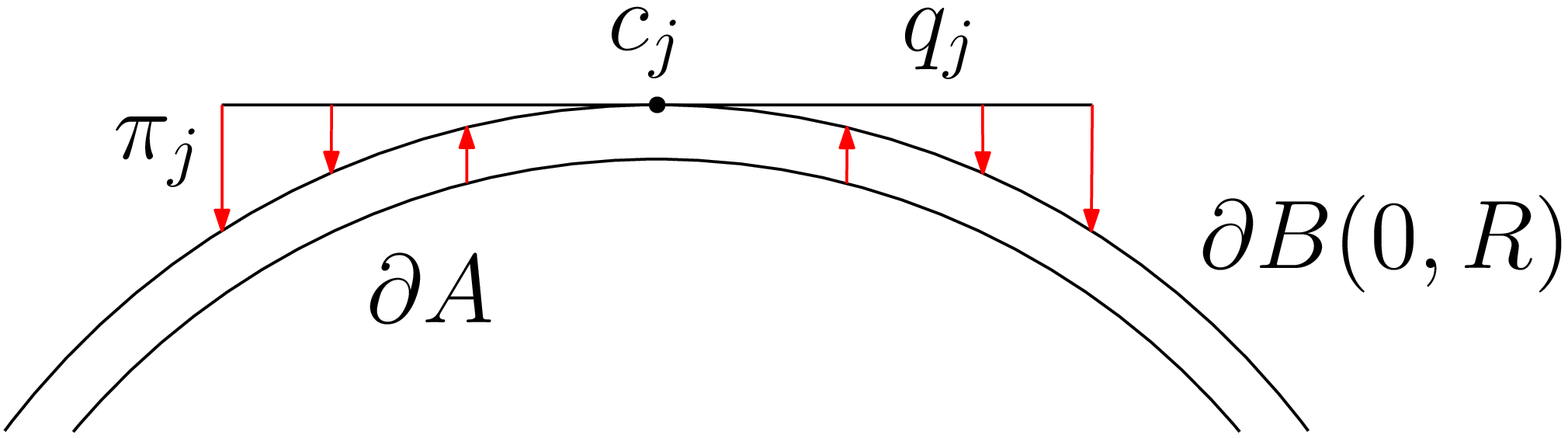}
            \end{minipage}\hfill
            \begin{minipage}[c]{0.35\textwidth}
                \includegraphics[width=0.95\textwidth]{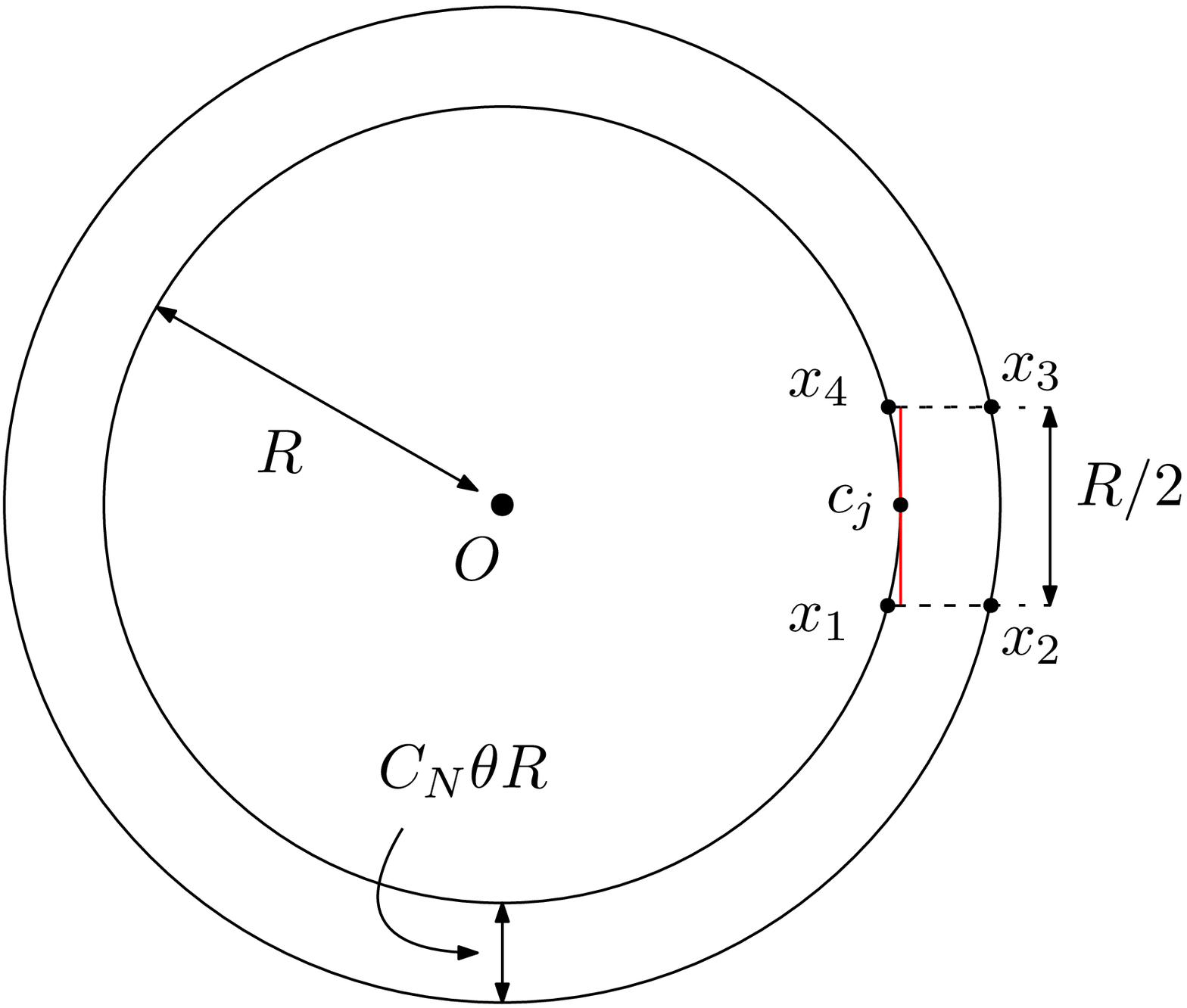}
            \end{minipage}
            \caption{The image on the left represents the map $\pi_j$ with the red arrows, the cube $q_j$ that is the horizontal segment, and one part of the boundary of the annulus $A$. In the figure on the right, the cube $q_j$ is represented by the red vertical segment, while the points $x_1$, $x_2$, $x_3$, $x_4$ denote the corners (since the figure is in 2D) of the outer part of what we call ``curved slab''.}\label{figure:covering-sphere}
        \end{figure}
        Since $g$ is radial and radially decreasing and radial we have that
        \[
        	S \leq \int_{[-R/2,R/2]^{N-1}}\int_{[-C_N\theta R,C_N\theta R]}g((y',0))\diff y'\diff t = 2C_N\theta R\int_{[-R/2,R/2]^{N-1}}g((y',0))\diff y'.
        \]
        Moreover, since $g\in L^1_{loc}(\R^{N-1})$ and $R\geq 1$ we can apply again the strong $L^{\infty}$ bound \eqref{eq:strong-uniform-bound-v} to conclude that
        \[
            S\leq C_N\theta R\cdot C'R^{N-1} = C''\theta R^N,
        \]
        where of course the constant $C''$ depends only on $g$ and the space dimension $N$.
    \end{proof}
    We report for convenience the statement of \cite[Lemma 14]{franklieb19}, that permits to modify a function in order to make it closer to the characteristic function of a ball.
    \begin{lemma}\label{lemma:rounding}
        Let $h\in\K$ be a function with $\norm{h}_{1}=m$, and let $\theta\in[0,1]$. Then there exists $h'\in\K$ with the following properties
        \begin{align}
            &\norm{h'}_{1}=\norm{h}_{1},\label{eq:constr-1}\\
            &\Chi{(1-\theta)B[m]}\leq h'\leq\Chi{(1+\theta)B[m]},\label{eq:constr-2}\\
            & h'(x)\leq h(x)\text{ for }x\not\in B[m],\quad h'(x)\geq h(x)\text{ for }x\in B[m],\label{eq:constr-3}\\
            &\int |h'-\Chi{B[m]}|\dx\leq\int|h-\Chi{B[m]}|\dx,\label{eq:constr-4}\\
            &\int|h-h'|\dx \leq 2\int_{E}|h-h'|\dx\qquad \text{where }E=(1-\theta)B[m]\cup (\R^{N}\setminus (1+\theta)B[m])\label{eq:constr-5}.
        \end{align}
    \end{lemma}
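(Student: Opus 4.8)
\emph{Proof sketch.} The plan is to build $h'$ from $h$ by a ``truncate, then rebalance'' procedure. First I would impose the values forced by \eqref{eq:constr-2}: set $h'\equiv 1$ on the inner ball $(1-\theta)B[m]$ and $h'\equiv 0$ on $\R^N\setminus(1+\theta)B[m]$. This fills in the mass $a\coloneqq\int_{(1-\theta)B[m]}(1-h)\dx\ge 0$ and discards the mass $b\coloneqq\int_{\R^N\setminus(1+\theta)B[m]}h\dx\ge 0$, so that to recover the constraint $\norm{h'}_1=\norm{h}_1$ I must transfer a net mass $b-a$ using a modification supported in the annulus $A\coloneqq(1+\theta)B[m]\setminus(1-\theta)B[m]$. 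To also respect the sign condition \eqref{eq:constr-3}, I would make this modification \emph{one-signed}: if $b\ge a$, only \emph{raise} $h$ on $A\cap B[m]$ (and leave $h$ unchanged on $A\setminus B[m]$); if $a>b$, only \emph{lower} $h$ on $A\setminus B[m]$ (and leave $h$ unchanged on $A\cap B[m]$). In each case one stops exactly when the right amount of mass has been moved, which is possible by an intermediate-value argument applied to the interpolations $h+t(1-h)$, resp.\ $(1-t)h$, for $t\in[0,1]$; no radial rearrangement is needed.

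The crux — and the only step with real content — is checking that there is always enough room for this correction inside $0\le h'\le1$. Write $A_{\mathrm{in}}=A\cap B[m]$, $A_{\mathrm{out}}=A\setminus B[m]$, and let $m_1,m_{\mathrm{in}},m_{\mathrm{out}},m_\infty$ be the $L^1$-masses of $h$ on $(1-\theta)B[m]$, $A_{\mathrm{in}}$, $A_{\mathrm{out}}$, $\R^N\setminus(1+\theta)B[m]$. From $m_1+m_{\mathrm{in}}+m_{\mathrm{out}}+m_\infty=\norm{h}_1$ and $|A_{\mathrm{in}}|=(1-(1-\theta)^N)\,\norm{h}_1$ one gets
\[
	b-a \;=\; m_\infty+m_1-(1-\theta)^N\norm{h}_1 \;=\; |A_{\mathrm{in}}|-m_{\mathrm{in}}-m_{\mathrm{out}}.
\]
Hence, if $b\ge a$ then $0\le b-a\le|A_{\mathrm{in}}|-m_{\mathrm{in}}=\int_{A_{\mathrm{in}}}(1-h)\dx$, so there is room to raise $h$ up to $1$ on part of $A_{\mathrm{in}}$; and if $a\ge b$ then $0\le a-b=m_{\mathrm{in}}+m_{\mathrm{out}}-|A_{\mathrm{in}}|\le m_{\mathrm{out}}=\int_{A_{\mathrm{out}}}h\dx$ (using $h\le1$ on $A_{\mathrm{in}}$), so there is room to lower $h$ down to $0$ on part of $A_{\mathrm{out}}$. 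The degenerate cases $\theta\in\{0,1\}$ and pieces of zero measure are covered by the same recipe.

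It then remains to verify \eqref{eq:constr-1}--\eqref{eq:constr-5} for the $h'\in\K$ so produced, which is routine. Property \eqref{eq:constr-1} is built in, since the masses added and removed on the cores and on the annulus sum to zero; \eqref{eq:constr-2} is immediate; and \eqref{eq:constr-3} holds on each piece because inside $B[m]$ we only increased $h$ and outside $B[m]$ we only decreased it. For \eqref{eq:constr-4} I would note the pointwise bound $|h'-\Chi{B[m]}|\le|h-\Chi{B[m]}|$ (an equality on $(1-\theta)B[m]$; $1-h'\le1-h$ on $A_{\mathrm{in}}$; $h'\le h$ on $A_{\mathrm{out}}$; $0\le h$ outside $(1+\theta)B[m]$) and integrate. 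Finally, with $E=(1-\theta)B[m]\cup(\R^N\setminus(1+\theta)B[m])$ one has $\int_E|h-h'|\dx=a+b$, while the annulus correction is one-signed of total size $|b-a|$, so $\int_A|h-h'|\dx=|b-a|\le\max\{a,b\}$ and therefore
\[
	\int_{\R^N}|h-h'|\dx \;=\; (a+b)+|b-a| \;=\; 2\max\{a,b\} \;\le\; 2\int_E|h-h'|\dx,
\]
which is \eqref{eq:constr-5}. I expect essentially all of the difficulty to sit in the volume identity of the second paragraph; granted it, the rest is bookkeeping.
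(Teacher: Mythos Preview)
Your argument is correct. The paper itself does not supply a proof of this lemma: it simply quotes the statement from \cite[Lemma~14]{franklieb19} and uses it as a black box, so there is no proof in the paper to compare against. Your ``truncate on the cores, then make a one-signed correction on the annulus'' construction is exactly the natural one, and the crucial volume identity
\[
b-a \;=\; |A_{\mathrm{in}}|-m_{\mathrm{in}}-m_{\mathrm{out}}
\]
(together with $m_{\mathrm{out}}\ge 0$ and $m_{\mathrm{in}}\le |A_{\mathrm{in}}|$) is precisely what guarantees enough room in the relevant half-annulus; the verification of \eqref{eq:constr-1}--\eqref{eq:constr-5} that you give is clean, including the nice observation that $(a+b)+|b-a|=2\max\{a,b\}\le 2(a+b)$ for \eqref{eq:constr-5}. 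The intermediate-value interpolation $h+t(1-h)$ (resp.\ $(1-t)h$) is a perfectly valid way to realize the correction, and the degenerate endpoints $\theta\in\{0,1\}$ indeed fall out of the same scheme (for $\theta=0$ one gets $a=b$ and $h'=\Chi{B[m]}$).
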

    The following proposition contains the most delicate estimates, and it provides a bound on the Hausdorff distance between $\supp|h-\Chi{B[m]}|$ and $\bdry B[m]$ depending only on $\norm{h-\Chi{B[m]}}_1$ and on $m$. This kind of bound is clearly something special, that holds for minimizers but not for a generic function $h$. The proof that we are going to write is a simple adaptation of the one present in \cite{franklieb19}, where we make some minor changes due to our different estimates. We also point out that Frank and Lieb's proof relies on the fact that $\RR(f)\geq 0$ for any $f\in L^1\cap L^{\infty}$ with bounded support and $\int f\dx=0$. This is true if $g(x)=|x|^{-\lambda}$ for $\lambda\in (0,N)$, but not in general. We do not use this property, and thus we are able to prove the theorem in wider generality.
    \begin{prop}\label{prop:GG-haus}
        Let $\alpha>0$ be fixed and let $g:\R^N\setminus\{0\}\to[0,+\infty)$ be a function satisfying \HI. There exist two constants $C_{0}=C_{0}(\alpha,g,N)>0$ and $m_{0}=m_{0}(\alpha,g,N)>0$ such that, for every minimizer $h\in\K$ of $\GG$ with constrained ``mass" $\norm{h}_{1}=m>m_{0}$, we have that
        \[\Chi{(1-C_{0}A(h))B[m]}\leq h\leq \Chi{(1+C_{0}A(h))B[m]}.\]
    \end{prop}
    \begin{proof}
        The strategy is to build a sequence of competitors out of a given minimizer and estimate precisely the difference in energy between them and the minimizer.\\
        We will determine later the value of the constants $C_{0}$ and $m_{0}$, for now let us fix a minimizer $h_{0}$ with $\norm{h_{0}}_{1}=m>m_{0}$ (that is also the first element of the sequence) and let us properly translate it in order to have that the optimal asymmetry ball is centered in $0$. The following elements of the sequence are defined applying \autoref{lemma:rounding}: if $h_{k}$ has already been built, then $h_{k+1}$ is produced applying that lemma to $h=h_{k}$ and $\theta=2^{-k}$. Now we can study the energy difference between two consecutive functions in the sequence, where the idea is to isolate the terms involving $h_{k+1}-h_{k}$ and to make appear the asymmetry of $h_{k}$ and $h_{k+1}$ in order to use the previous inequalities:
        \begin{align*}
            \GG(h_{k+1})-\GG(h_{k}) &= \GG(h_{k+1}-h_{k},h_{k+1})+\GG(h_{k},h_{k+1})+\GG(h_{k+1}-h_{k},h_{k})-\GG(h_{k+1},h_{k})\\
            &=\GG(h_{k+1}-h_{k},h_{k+1}-\Chi{B[m]}) + \GG(h_{k+1}-h_{k},h_{k}-\Chi{B[m]})\\
            &\qquad+2\GG(h_{k+1}-h_{k},\Chi{B[m]}).
        \end{align*}
        We treat the last term exploiting the comparison estimate of \autoref{lemma:growth} and the properties \eqref{eq:constr-1}, \eqref{eq:constr-3} and \eqref{eq:constr-5}. Let us take $m_{0}$ bigger than the mass constant in that lemma, then denoting by $R$ the radius of $B[m]$ we obtain that
        \begin{align*}
            \GG(h_{k+1}-h_{k},\Chi{B[m]}) &= \int (h_{k+1}-h_{k})\Phi_{m}\dx = \int (h_{k+1}-h_{k})(\Phi_{m}-\Phi_{m}(R))\dx\\
            & =-\int |h_{k+1}-h_{k}||\Phi_{m}-\Phi_{m}(R)|\dx\\
            &\leq-\int_{\{||x|-R|\geq 2^{-k}R\}}|h_{k+1}-h_{k}||\Phi_{m}-\Phi_{m}(R)|\dx\\
            &\leq -C(\alpha,g,N)R^{N+\alpha}2^{-k}\norm{h_{k+1}-h_{k}}_{1}.
        \end{align*}
        Now we concentrate ourselves on the first term of the sum (and the second can be treated using analogous inequalities). In order to write more concise formulas, it is convenient to define the quantity
        \[a_k\coloneqq 2^kR^{-N}\norm{h_k-\Chi{B[m]}}_1\qquad \forall k\in\N,\]
        that up to a multiplicative constant is an approximation of the average of $|h_k-\Chi{B[m]}|$ on the annulus of thickness $2^{-k}$. 
        We separately estimate the attractive and repulsive terms: for the attractive one we can use the diameter bound shown in \autoref{lemma:diam-bound-pow} (that we can apply if $m_{0}\geq \omega_N$) and the condition \eqref{eq:constr-4} to see that
        \begin{align*}
            \mathcal{I}_{\alpha}(h_{k+1}-h_{k},h_{k+1}-\Chi{B[m]}) &\leq C(\alpha,g,N)R^{\alpha}\norm{h_{k+1}-h_{k}}_{1}\cdot2^{-k-1}R^Na_{k+1}\\
            &\leq C(\alpha,g,N)R^{N+\alpha}2^{-k}a_k\norm{h_{k+1}-h_{k}}_{1}.
        \end{align*}
        Of course, the attractive part of the second term can be treated in the same way since we directly obtain that $\mathcal{I}_{\alpha}(h_{k+1}-h_{k},h_{k}-\Chi{B[m]})\leq C(\alpha,g,N)R^{N+\alpha}2^{-k}a_k\norm{h_{k+1}-h_{k}}_{1}$.\\
        For the repulsive term we make use of the property \eqref{eq:constr-2}, that implies that we can apply \autoref{lemma:bound-annulus} to the function $h=|h_{k+1}-\Chi{B[m]}|$ with $\theta=2^{-k}$ (we instead use $\theta=2^{-k+1}$ for the term including $h_{k}-\Chi{B[m]}$). In fact, we arrive to
        \begin{align*}
            \RR(h_{k+1}-h_{k},h_{k+1}-\Chi{B[m]})& \leq \norm{h_{k+1}-h_{k}}_{1}\left[\sup_{x}\int g(x-y)|h_{k+1}(y)-\Chi{B[m]}(y)|\dy\right]\\
            &\leq C(g,N) 2^{-k}R^N\norm{h_{k+1}-h_{k}}_{1}.
        \end{align*}
        Then we can combine the previous inequalities to obtain
        \begin{align}\label{eq:inutile21}
            \GG(h_{k+1})-\GG(h_{k}) \leq - R^{N+\alpha}2^{-k}\norm{h_{k+1}-h_{k}}_{1}\left(C - C' a_{k}-C''R^{-\alpha}\right),
        \end{align}
        where $C$, $C'$ and $C''$ are positive constants that depend only on $\alpha$, $g$ and $N$. Clearly we can take $m_0$ large enough so that $R^{-\alpha}<C/(4C'')$, but thanks to \autoref{rem:asymmetry-to-zero} we can also take $m_{0}$ big enough in order to make $a_{0}<C/(4C')$. Now we are left with two possibilities: either there exists $k_{0}>0$ such that $a_{k_{0}}\geq C/(4C')$, or $a_{k}<C/(4C')$ for every $k\in\N$. If the first eventuality occurs, then we can take the smallest $k_0$ such that $a_{k_{0}}\geq C/(4C')$, and thanks to \eqref{eq:inutile21} we see that
        \begin{equation}\label{eq:energy-comparison-sequence}
        \begin{split}
            \GG(h_{k_{0}})-\GG(h_{0}) &= \sum_{k=0}^{k_{0}-1}\GG(h_{k+1})-\GG(h_{k})\\
            &\leq - R^{N+\alpha}\sum_{k=0}^{k_{0}-1}2^{-k}\norm{h_{k+1}-h_{k}}_{1}\left(C - C' a_{k}-C''R^{-\alpha}\right) \leq0.
        \end{split}
        \end{equation}
        Since $h_{0}$ is a minimizer of $\GG$ with constrained volume $m$, and $\norm{h_{k}}_{1}=m$ for every $k\in\N$, then we necessarily have that $\GG(h_{k_0})-\GG(h_{0})\geq 0$, which is compatible with the previous conditions only if $h_{k}=h_{0}$ for every $k\leq k_{0}$. Therefore, using the property \eqref{eq:constr-2}, we have that
        \[
            \Chi{\left(1-2^{-k_{0}+1}\right)B[m]}\leq h_{0}\leq \Chi{\left(1+2^{-k_{0}+1}\right)B[m]},
        \]
        and so we need only to estimate $2^{-k_{0}}$ in order to prove the result. In this case we have that $a_{k_{0}}\geq C/(4C')$, therefore using \eqref{eq:constr-4} we arrive to
        \[
            2^{-k_{0}}\leq \frac{4C'}{C}R^{-N}\norm{h_{k_{0}}-\Chi{B[m]}}_{1}\leq \frac{4C'}{C}R^{-N}\norm{h_{0}-\Chi{B[m]}}_{1}\leq\frac{4C'\omega_{N}}{C}A(h_{0})
        \]
        that is the desired result.\\
        If instead $a_k<C/(4C')$ for every $k\in\N$, then we apply \eqref{eq:energy-comparison-sequence} to a generic index $k_0\in\N$. If $h_{k+1}\neq h_k$ for some $k<k_0$ then the last inequality is strict, but this is impossible since $h_0$ minimizes $\GG$. Therefore $h_k=h_0$ for every $k\in\N$, and by construction
        \[
            \Chi{\left(1-2^{-k+1}\right)B[m]}\leq h_{k}\leq \Chi{\left(1+2^{-k+1}\right)B[m]}\qquad \forall k\in\N.
        \]
        As a consequence $h_{0}=\Chi{B[m]}$, that clearly satisfies the inequality in the statement. In the end, we can choose $m_{0}$ big enough in order to make the previous arguments work and the constant in the statement is $C_{0}=8C'\omega_{N}/C$.
    \end{proof}

    \begin{proof}[Proof of \autoref{mainthm:GG}]
        We take $m_{0}$ to be the maximum mass threshold appearing in \autoref{lemma:diam-bound-pow}, \autoref{prop:low-bound-RR}, \autoref{lemma:growth} and \autoref{prop:GG-haus}. Given $m>m_{0}$ we take any minimizer $h$ of $\GG$ with $\norm{h}_{1}=m$ and optimal asymmetry ball centered in the origin. Using \autoref{thm:power-ineq} we obtain that
        \[
            Cm^{2+\alpha/N}A(h)^{2}\leq \mathcal{I}_{\alpha}(h)-\mathcal{I}_{\alpha}(B[m])\leq \RR(B[m])-\RR(h)
        \]
        for some constant $C=C(N,g,\alpha)$. Thanks to \autoref{rem:asymmetry-to-zero} we can take $m_0$ big enough in order to have $A(h)$ as small as we want. Then combining \autoref{prop:GG-haus} and \autoref{prop:low-bound-RR} (that we can apply because of the small asymmetry) we have that $\RR(B[m])-\RR(h)\leq C' m^{2}A(h)^{2}$, and therefore we have that $Cm^{\alpha/N}A(h)^{2}\leq C' A(h)^{2}$. Enlarging $m_{0}$ if necessary, we see that the last inequality can hold only if $A(h)=0$, that is precisely the thesis. 
    \end{proof}
\bibliographystyle{abbrv}
\bibliography{bibliografy.bib}
\end{document}